\newtheorem{thm}{theorem}[section]
\newtheorem{theorem}[thm]{Theorem}
\newtheorem{proposition}[thm]{Proposition}
\newtheorem{lema}[thm]{Lemma}
\newtheorem{corollary}[thm]{Corollary}
\newtheorem{remark}[thm]{Remark}
\newtheorem{example}[thm]{Example}
\newtheorem{definition}[thm]{Definition}
\newenvironment{proof}[1][Proof]{\noindent\textbf{#1\,} }{\hfill \rule{0.5em}{0.5em}\medskip}
\title{$\mathbb{Z}$-gradings  of full support on the Grassmann algebra}
\author{Alan Guimar\~aes \\
	Departamento de Matemática\\
	Universidade Federal do Rio Grande do Norte\\
	Natal, RN, 59078-970, Brazil\\
	alansimoes10@hotmail.com\\
	\\
Antonio Brandão Jr.\\
Unidade Acadêmica de Matemática\\
Universidade Federal de Campina Grande\\
Campina Grande, PB 58429-970, Brazil\\
brandao@mat.ufcg.edu.br\\
	\\
	Claudemir Fidelis\thanks{ {\bf Corresponding Author}}~\thanks{Supported by FAPESP grant No.~2019/12498-0}\\ 
	Unidade Acadêmica de Matemática\\
	Universidade Federal de Campina Grande\\
	Campina Grande, PB 58429-970, Brazil
	\\ and \\Instituto de Matem\'atica e Estat\'istica da Universidade de S\~ao Paulo\\
	S\~ao Paulo, SP, 05508-090, Brazil\\
	claudemir@mat.ufcg.edu.br\\
} 
\date{}
\begin{document}
\maketitle

\begin{abstract}
Let $E$ be the infinite dimensional Grassmann algebra over a field $F$ of characteristic zero. In this paper we investigate the structures of $\mathbb{Z}$-gradings on $E$ of full support. Using methods of elementary number theory, we describe the $\mathbb{Z}$-graded polynomial identities for the so-called $2$-induced $\mathbb{Z}$-gradings on $E$ of full support. As a consequence of this fact we provide examples of $\mathbb{Z}$-gradings on $E$ which are PI-equivalent but not $\mathbb{Z}$-isomorphic. This is the first example of graded algebras with infinite support that are PI-equivalent and not isomorphic as graded algebras. We also present the notion of central $\mathbb{Z}$-gradings on $E$ and we show that its $\mathbb{Z}$-graded polynomial identities are closely related to the $\mathbb{Z}_{2}$-graded polynomial identities of $\mathbb{Z}_{2}$-gradings on $E$.
 
 \medskip
 \noindent
 \textbf{Keywords} 
 Grassmann algebra; graded algebra; graded identity; full support;  greatest common divisor.
 
 \medskip
 
 \noindent
 \textbf{Mathematics Subject Classification 2020:} 11A05, 11A51, 15A75, 16R10, 16W50
\end{abstract}

\section{Introduction}

Let $F$ be a field of characteristic zero and let $L$ be a vector space over $F$ with basis $e_{1}$, $e_{2}$, \dots. The infinite dimensional Grassmann algebra $E$ of $L$ over $F$ is the vector space with a basis consisting of 1 and all products $e_{i_1}e_{i_2}\cdots e_{i_k}$ where $i_{1}<i_{2}<\ldots <i_{k}$, $k\geq 1$. The multiplication in $E$ is induced by $e_{i}e_{j}=-e_{j}e_{i}$ for all $i$ and $j$. We shall denote by $B=B_{E}$ the above canonical basis of $E$. The Grassmann algebra has a natural $\mathbb{Z}_2$-grading $E_{can}=E_{(0)}\oplus E_{(1)}$ where $E_{(0)}$ is the vector space spanned by 1 and all products $e_{i_1}\cdots e_{i_k}$ with even $k$ while $E_{(1)}$ is the vector space spanned by the products with odd $k$. Clearly $E_{(0)}$ is just the centre of $E$ and $E_{(1)}$ is the ``anticommuting'' part of $E$. This natural structure of a $\mathbb{Z}_2$-graded algebra on $E$ makes the Grassmann algebra very important not only in Algebra but in various areas of Mathematics as well as in Theoretical Physics. We are not going to discuss the applications of the Grassmann algebra; we point out to the book by Berezin \cite{berezin} for such applications.

The Grassmann algebra $E$ is one of the most important algebras satisfying a polynomial identity, also known as PI-algebras. 
Kemer \cite{basekemer2, basekemer} developed the structure theory of the ideals of identities of associative algebras, also called T-ideals, in characteristic 0. This description led Kemer to the positive solution of the long standing and extremely important problem proposed by W. Specht in 1950: is every T-ideal in characteristic 0 finitely generated as a T-ideal? A key ingredient in Kemer's research was the following result: every associative PI-algebra over a field of characteristic zero is PI-equivalent to the Grassmann envelope of a finite dimensional associative superalgebra.

Recall that in characteristic 2 the Grassmann algebra is commutative hence not very interesting from the point of view of PI theory. The polynomial identities of the Grassmann algebra were described by Latyshev \cite{latyshev}, and later on by Krakowski and Regev \cite{KR}. The identities of $E$ have been extensively studied also in positive characteristic. The interested reader can consult the paper \cite{agpk} and the references therein for further information. In characteristic 0, it is the easiest example of an algebra which does not satisfy any standard polynomial. We recall that the standard polynomial is the polynomial $s_n(x_1,\ldots, x_n) = \sum_{\sigma\in S_n} (-1)^\sigma x_{\sigma(1)}\cdots x_{\sigma(n)}$, where $S_n$ stands for the symmetric group on the letters $\{1,\ldots, n\}$, and $(-1)^\sigma$ is the sign of the permutation $\sigma$. This polynomial plays a prominent role in PI theory, especially in characteristic 0.

Group gradings on algebras and the corresponding graded polynomial identities have been extensively studied in PI theory during the last two or three decades. The graded identities for the natural (or canonical) grading on $E$ are well known, see for example \cite{GMZ}. A grading on the Grassmann algebra is said to be a homogeneous grading if each element of $L$ is homogeneous. Let $A=\oplus_{g\in G}A_{g}$ be a $G$-graded algebra and $e$ the neutral element of $G$. The support of the grading is the set $Sup(A)=\{g\in G\mid A_{g}\neq 0\}$. If $Sup(A)=\{e\}$, we have the trivial grading on $A$.

We cite here the research developed by Anisimov \cite{anisimov1, anisimov2}, and by Di Vincenzo and Da Silva \cite{silcod, disil}. In \cite{claud-plamen-alan} the authors investigated the $\mathbb{Z}_{2}$ and $\mathbb{Z}$-graded central polynomials for $E$. In \cite{clau-laise-alan} the $\mathbb{Z}_{q}$-graded identities and central polynomials for all homogeneous $\mathbb{Z}_{q}$-gradings on $E$ were described, for $q>2$. In \cite{aagpk2} the authors developed the first study related to non-homogeneous $\mathbb{Z}_{2}$-gradings on $E$. They proved that there exist infinitely many non-homogeneous such gradings on $E$. On the other hand it turns out that in all ``typical'' cases the gradings obtained are very similar to homogeneous ones.  


In \cite{claud-plamen-alan, aagpk} the authors studied three types of  $\mathbb{Z}$-grading on $E$, which were denoted by $E^{\infty}$, $E^{k^\ast}$ and $E^{k}$, for all non-negative integer $k$. 
These structures are the more natural $\mathbb{Z}$-grading on $E$ due to its closely relation with the superalgebras $E_{\infty}$, $E_{k^\ast}$ and $E_{k}$. The support of $E^{k^\ast}$ is $\{0,1,\ldots, k\}$ while $E^{\infty}$ and $E^{k^\ast}$ have the set $\{0,1,\ldots\}$ as support. In this context, we formulated the questions: Is it possible to define a $\mathbb{Z}$-grading on $E$ whose support is all the group $\mathbb{Z}$? In affirmative case, is it possible to obtain some type of classification?

These questions were the main motivation to product this paper. The main results obtained here are:
\begin{enumerate}
	\item \textbf{Result 1}: We describe the $\mathbb{Z}$-graded polynomial identities for all $2$-induced $\mathbb{Z}$-gradings on $E$ of full support. We present a necessary and sufficient condition for two structures of this type to be $\mathbb{Z}$-isomorphic.
	
	\item \textbf{Result 2}: We provide examples of $\mathbb{Z}$-gradings on $E$ which are PI-equivalent but not $\mathbb{Z}$-isomorphic. This is the first example of PI-equivalence of graded algebra when the support of the grading is infinite.
	
	\item \textbf{Result 3}: We construct certain type of $\mathbb{Z}$-grading on $E$ whose $T_{\mathbb{Z}}$-ideal is closely related to $T_{2}$-ideal of $\mathbb{Z}_2$-gradings on $E$. 
\end{enumerate}

To simplify both the exposition and the notation, we consider the ground field $F$ of characteristic zero. Nevertheless  a lot of results obtained in Subsection  \ref{sub characterization} does not depend on the field $F$.

\section{Preliminaries}\label{Preliminares}
Throughout this paper all vector spaces will be considered over $F$,  and all algebras will be associative and unitary, and will be considered over $F$. We also fix an abelian group $G$. In this section we make a brief presentation of the tools used for the developing of this paper.

A $G$-grading on an algebra $A$ is a vector space decomposition $A=\oplus_{g\in G}A_{g}$ such that  $A_{g}A_{h}\subset A_{gh}$ for all $g$, $h\in G$. When a nonzero element $a\in A_{g}$ we say that $a$ is homogeneous and its degree is $\alpha(a)=g$ or $\|a\|=g$. A vector space (subalgebra, ideal) $B$ of $A$ is said to be a homogeneous vector space (subalgebra, ideal) if $B=\oplus_{g\in G}B\cap A_{g}$.
If $H$ is a subgroup of $G$ we define the quotient $G/H$-grading as $A=\oplus_{\overline{g}\in G/H}A_{\overline{g}}$ where $A_{\overline{g}}=\oplus_{h\in H}A_{gh}$. We say that two $G$-graded algebras $A=\oplus_{g\in G}A_{g}$ and $B=\oplus_{g\in G}B_{g}$ are $G$-isomorphic if there exists an isomorphism of algebras $\varphi : A\rightarrow B$ such that $\varphi(A_{g})=B_{g}$, for all $g\in G$. Now, let $A=\oplus_{g\in G}A_g$ and $B=\oplus_{h\in H}B_h$ be algebras graded by the groups $G$ and $H$ respectively. The graded algebras $A$ and $B$ are weakly isomorphic if there exists an isomorphism of groups $\rho:G\rightarrow H$ and an isomorphism of algebras $\varphi:A \rightarrow B$ such that $\varphi (A_g)=B_{\rho(g)}$ for every $g$ in $G$.

Group gradings on the Grassmann algebra with groups $G$ other than $\mathbb{Z}_2$ were studied in the papers \cite{centroneG, disilplamen}. In these the authors describe homogeneous gradings on $E$ by finite abelian groups and by cyclic groups of prime order, respectively. Along this line we will focus on $\mathbb{Z}$-gradings on $E$.

Let $ X=\cup_{i\in\mathbb{Z}}X_{i}$ be the disjoint union of infinite countable sets of variables $X_{i}=\{x_{1}^{i},x_{2}^{i},\ldots\}$, $i\in\mathbb{Z}$. Assuming that for each $i\in\mathbb{Z}$ the elements of the set $X_{i}$ are of $\mathbb{Z}$-degree $i$, the free associative algebra $F\langle X|\mathbb{Z}\rangle$ has a natural $\mathbb{Z}$-grading $\oplus_{i\in\mathbb{Z}} F\langle X|\mathbb{Z}\rangle^{i}$. Here $F\langle X|\mathbb{Z}\rangle^{i}$ is the vector subspace of $F\langle X|\mathbb{Z}\rangle$ spanned by all monomials of $\mathbb{Z}$-degree $i$. It is also easy to see that $F\langle X|\mathbb{Z}\rangle$ is free in the following sense. Given a $\mathbb{Z}$-graded algebra $A=\oplus_{i\in\mathbb{Z}} A_i$ and a map $g\colon X\to A$ such that $g(X_i)\subseteq A_i$ for every $i$ then $g$ can be uniquely extended to a homomorphism of algebras $\varphi\colon F\langle X|\mathbb{Z}\rangle\to A$ which respects the $\mathbb{Z}$-gradings on these algebras (such homomorphisms are called $\mathbb{Z}$-graded ones).

A polynomial $f(x_{1}^{l_1},\ldots, x_{r}^{l_r})$ is called a $\mathbb{Z}$-graded polynomial identity (PI) of the $\mathbb{Z}$-graded algebra $A$ if $f(a_{1},\ldots, a_{r})=0$ for all $a_{i}\in A_{l_i}$. The set $T_{\mathbb{Z}}(A)$ of all $\mathbb{Z}$-graded polynomial identities is an ideal of $F\langle X|\mathbb{Z}\rangle$. It is also invariant under $\mathbb{Z}$-graded endomorphisms of $F\langle X|\mathbb{Z}\rangle$. Such ideals are called $T_{\mathbb{Z}}$-ideals. As in the case of ordinary polynomial identities one proves that an ideal $I$ in $F\langle X|\mathbb{Z}\rangle$ is a $T_{\mathbb{Z}}$-ideal if and only if it coincides with the ideal of all $\mathbb{Z}$-graded identities of some $\mathbb{Z}$-graded algebra $A$. It is well known that studying ordinary polynomial identities in characteristic 0, one may consider the multilinear ones. The analogous fact holds for graded identities as well. More precisely if $A$ is a $\mathbb{Z}$-graded algebra over the field $F$ of characteristic 0, the ideal $T_{\mathbb{Z}}(A)$ of all $\mathbb{Z}$-graded identities of $A$ is generated as a $T_{\mathbb{Z}}$-ideal by its multilinear polynomials. In the more general case of an infinite field one has to take into account the multihomogeneous polynomials instead of the multilinear ones. 

If $A$ and $B$ are $\mathbb{Z}$-graded algebras we say that $A$ and $B$ are PI-equivalent as $\mathbb{Z}$-graded algebras if $T_{\mathbb{Z}}(A)=T_{\mathbb{Z}}(B)$. Given a $T_\mathbb{Z}$-ideal $I$ of $F\langle X|\mathbb{Z}\rangle$, the \textit{variety of $\mathbb{Z}$-graded algebras} $\mathfrak{V}^\mathbb{Z}$ associated to $I$ is the class of all the $\mathbb{Z}$-graded algebras $A$ such that $I$ is contained in $T_{\mathbb{Z}}(A)$. The $T_\mathbb{Z}$-ideal $I$ is denoted by $T_{\mathbb{Z}}(\mathfrak{V}^\mathbb{Z})$. The variety $\mathfrak{V}^\mathbb{Z}$ is generated by the $\mathbb{Z}$-graded algebra $A$ if $T_{\mathbb{Z}}(\mathfrak{V}^\mathbb{Z})=T_{\mathbb{Z}}(A)$. It is clear that the same concepts can be formulated for any abelian group $G$. Relavant works approached questions related to PI-equivalence of algebras, see for example \cite{divi pi, kemer pi}. 

Here we recall the more natural three types of $\mathbb{Z}$-gradings on $E$ that were presented in \cite{aagpk}; these are the natural analogues of the ones considered in \cite{disil}. To this end we consider the following attribution of degrees on its generators:
\begin{align*}
\|e_{i}\|^{k} & =\begin{cases} 0,\text{ if }  i=1,\ldots,k\\ 1, \text{ otherwise } 
\end{cases},\\	
\|e_{i}\|^{k^\ast} & =\begin{cases} 1,\text{ if }  i=1,\ldots,k\\ 0, \text{ otherwise } 
\end{cases},\\
\|e_{i}\|^{\infty} & =\begin{cases} 0,\text{ for }  i\text{ even }\\ 1, \text{ for i odd } 
\end{cases}.
\end{align*}	
Then we induce the $\mathbb{Z}$-grading on $E$ putting 
\[
\|e_{j_{1}}\cdots e_{j_{n}}\| =\|e_{j_{1}}\|+\cdots + \|e_{j_{n}}\|,
\]
and extend it to $E$ by linearity.

We denote by $E^{k}$, $E^{k^\ast}$ and $E^{\infty}$ the three types of $\mathbb{Z}$-graded algebras above. When $\|e_{i}\|=1$ for all $i$ (the $\mathbb{Z}$-grading $E^k$ for $k=0$), we denote the respective $\mathbb{Z}$-grading by $E^{can}$, which is called natural $\mathbb{Z}$-grading on $E$. 

The superalgebras $E_{k}$, $E_{k^\ast}$, and $E_{\infty}$ can be obtained respectively from $E^{k}$, $E^{k^\ast}$ and $E^{\infty}$ in a natural manner. 
We recall that the support of $E^{k^\ast}$ is $\{0, 1,\ldots, k\}$ while the supports of $E^{\infty}$ and $E^{k}$ are equal to $\{0,1,\ldots\}$.

In \cite{aagpk} the description of the graded polynomial identities for such gradings were presented, namely:

\begin{proposition}[see \cite{aagpk}]
	Let the field $F$ be of characteristic 0. 
	The $T_{\mathbb{Z}}$-ideal of the $\mathbb{Z}$-graded polynomial identities  of $E^{can}=\oplus_{n\in\mathbb{Z}} E^{(n)}$ is generated by the graded polynomials  
	\begin{itemize}
		\item $x$, if $\alpha(x)<0$.
		\item $[x_{1},x_{2}]$, if $\alpha(x_{1})$ or $\alpha(x_{2})$  is an even integer.
		\item $x_{1}x_{2} + x_{2}x_{1}$, if $\alpha(x_{1})$ and $\alpha(x_{2})$ are odd integers. 
	\end{itemize}
	 
\end{proposition}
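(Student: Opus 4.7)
\medskip

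\noindent\emph{Proof sketch.} I would first verify that each polynomial in the list is a graded identity of $E^{can}$. The support of $E^{can}$ is $\{0,1,2,\ldots\}$, so $E^{(n)}=0$ for $n<0$, giving the first family. A nonzero homogeneous element of $E^{(2k)}$ with $k\ge 0$ is a linear combination of monomials $e_{i_1}\cdots e_{i_{2k}}$ of even length (or of $1$ when $k=0$), hence lies in the centre $E_{(0)}$; consequently $[x_1,x_2]$ is an identity as soon as one of $\alpha(x_1),\alpha(x_2)$ is even. Finally an element of $E^{(2k+1)}$ lies in $E_{(1)}$, so any two such anticommute, producing the third family.

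\medskip

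Let $I$ denote the $T_{\mathbb{Z}}$-ideal generated by these three families. Since $\operatorname{char}F=0$, to prove $T_{\mathbb{Z}}(E^{can})\subseteq I$ it is enough to show that every multilinear graded identity belongs to $I$. Fix a multilinear $f(x_1^{l_1},\ldots,x_n^{l_n})\in T_{\mathbb{Z}}(E^{can})$. If some $l_j<0$ the first generator absorbs $f$; otherwise all $l_j\ge 0$, and I partition $\{1,\ldots,n\}=P_e\sqcup P_o$ according to the parity of $l_j$. Using the commutator identities I would move each variable indexed by $P_e$ to the left, and using the anticommutator identities I would sort the variables indexed by $P_o$ into increasing index order, collecting a sign from the resulting permutation. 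Thus, modulo $I$, the polynomial $f$ collapses to a single scalar multiple $\lambda\,m$ of the canonical monomial
\[
m \;=\; \Bigl(\prod_{j\in P_e} x_j^{l_j}\Bigr)\, x_{j_1}^{l_{j_1}}x_{j_2}^{l_{j_2}}\cdots x_{j_r}^{l_{j_r}},
\]
where $j_1<j_2<\cdots<j_r$ enumerate $P_o$.

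\medskip

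It remains to show that $m$ is not itself an identity. Setting $N_0=0$ and $N_j=l_1+\cdots+l_j$, I would evaluate $x_j^{l_j}:=e_{N_{j-1}+1}e_{N_{j-1}+2}\cdots e_{N_j}$, a homogeneous element of degree $l_j$ built from pairwise disjoint blocks of generators. Under this substitution $m$ becomes $\pm\, e_1e_2\cdots e_{N_n}$, which belongs to the canonical basis of $E$ and is therefore nonzero. Since $f$ vanishes on every admissible evaluation, $\lambda=0$, and consequently $f\in I$.

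\medskip

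The main delicate point, in my view, is organising the reduction to canonical form so that each rearrangement step is explicitly justified by one of the three generating identities of $I$; the accumulated sign is immaterial because only the non-vanishing of $m$ on some evaluation is needed. The remaining steps are routine.
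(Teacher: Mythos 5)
Your argument is correct: the verification that the three families are identities, the reduction of an arbitrary multilinear graded identity modulo the candidate $T_{\mathbb{Z}}$-ideal to a scalar multiple of a single canonical monomial, and the nonvanishing evaluation on disjoint blocks $e_{N_{j-1}+1}\cdots e_{N_j}$ together give a complete proof. The paper itself states this proposition without proof (it is quoted from \cite{aagpk}), and your reduction-to-canonical-form argument is exactly the standard one used for such descriptions, so there is nothing to flag.
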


\begin{theorem}[see \cite{aagpk}]
	Assume that $F$ is a field of characteristic 0. 
	Let $T_{\mathbb{Z}}(E^{d})$ be the $T_{\mathbb{Z}}$-ideal of the $\mathbb{Z}$-graded polynomial identities for $E^{d}$, where $d$ stands for either $\infty$ or $k^{\ast}$. Then  
	\begin{enumerate}
		\item $T_{\mathbb{Z}}(E^{\infty})$ is generated by the set of the following polynomials:
		\begin{itemize}
			\item $x$, if $\alpha(x)<0$.
			\item $[x_{1}, x_{2}, x_{3}]$, for every choice of the degrees $\alpha(x_{1})$, $\alpha(x_{2})$, $\alpha(x_{3})$. 
		\end{itemize}
		\item $T_{\mathbb{Z}}(E^{k^{\ast}})$ is generated by the set of the following polynomials:
		\begin{itemize} \label{identidades}
			\item $x$, if $\alpha(x)\notin\{0,\ldots, k\}$. 
			\item $[x_{1}, x_{2}, x_{3}]$, for every choice of the degrees $\alpha(x_{1})$, $\alpha(x_{2})$, $\alpha(x_{3})$.
		\end{itemize}
	\end{enumerate}
\end{theorem}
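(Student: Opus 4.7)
I would establish both inclusions $I \subseteq T_{\mathbb{Z}}(E^{d}) \subseteq I$, where $I$ denotes the $T_{\mathbb{Z}}$-ideal generated by the listed polynomials. The inclusion $I \subseteq T_{\mathbb{Z}}(E^{d})$ is routine: monomials of $\mathbb{Z}$-degree outside $Sup(E^{d})$ vanish by definition of the support, and $[x_{1},x_{2},x_{3}]$ is a classical ungraded identity of $E$ due to Latyshev and Krakowski--Regev, hence also an identity of every $\mathbb{Z}$-grading on $E$.

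For the reverse inclusion, by characteristic zero we reduce to multilinear polynomials. Given a multilinear $f(x^{l_{1}}_{1},\ldots,x^{l_{r}}_{r})\in T_{\mathbb{Z}}(E^{d})$, the identities $x \equiv 0 \pmod{I}$ for out-of-support degrees allow me to assume every $l_{i}\in Sup(E^{d})$. Because $[x_{1},x_{2},x_{3}]\equiv 0 \pmod{I}$ makes commutators central modulo $I$, a standard straightening argument expresses $f$ modulo $I$ as a linear combination of normal monomials
\[
M_{\alpha} = x_{i_{1}}\cdots x_{i_{m}}\cdot [x_{j_{1}},x_{j_{2}}]\cdots [x_{j_{2s-1}},x_{j_{2s}}],
\]
with $i_{1}<\cdots<i_{m}$ and the commutator pairs lexicographically ordered. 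To separate the coefficients $c_{\alpha}$ in the expansion $f \equiv \sum c_{\alpha}M_{\alpha}$, I would substitute each $x^{l_{i}}_{i}$ by a Grassmann monomial $w_{i} \in E^{d}_{l_{i}}$ built from pairwise disjoint sets of generators: for $E^{\infty}$, $w_{i}$ is a product of $l_{i}$ distinct odd-indexed $e_{s}$'s, optionally adjoined with one even-indexed $e_{s}$ to prescribe the $\mathbb{Z}_{2}$-parity of $w_{i}$; for $E^{k^{\ast}}$, the degree-contributing factors come from $\{e_{1},\ldots,e_{k}\}$ and parity control from $\{e_{k+1},e_{k+2},\ldots\}$. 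One then has $[w_{j},w_{k}] = 2\, w_{j} w_{k}$ precisely when both $w_{j}, w_{k}$ carry odd $\mathbb{Z}_{2}$-parity, so varying the parity vector selectively turns individual commutators on or off.

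The main obstacle is that the normal monomials $M_{\alpha}$ are not linearly independent modulo $I$: there exist Pl\"ucker-type relations such as $[x_{1},x_{2}][x_{3},x_{4}] + [x_{1},x_{3}][x_{2},x_{4}] \equiv 0 \pmod{I}$, which arises directly from the instance $[[x_{1},x_{2}x_{3}],x_{4}] = 0$ of the triple-commutator identity upon expanding by the Leibniz rule. One must first use such relations to extract a genuine basis of the multilinear component of $F\langle X|\mathbb{Z}\rangle/I$ in each multidegree $n$, which turns out to have the expected size $c_{n}(E) = 2^{n-1}$; then, using the parity-tuned Grassmann substitutions above, one verifies that this basis remains linearly independent as a family of functions $(E^{d})^{r} \to E^{d}$, forcing every $c_{\alpha}$ to vanish. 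Carrying out this combinatorial and linear-algebraic separation — with the careful sign bookkeeping inherent to Grassmann products — is the heart of the proof; once complete, $f \in I$ and the theorem follows.
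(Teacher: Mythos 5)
Note first that the paper does not prove this theorem: it is imported verbatim from \cite{aagpk} as a known result, so there is no in-paper proof to match yours against. Judged on its own terms, your outline is a sound and essentially standard route, but it is a different (and heavier) one than the technique this paper deploys for the closely analogous statements in Section 3. There, the key observation (Lemma \ref{A0 inf even and odd1} together with Remark \ref{correspon idem}) is that every homogeneous component of the support contains infinitely many basis monomials of \emph{both} parities with pairwise disjoint supports, so that a graded multilinear substitution is ``essentially an ordinary substitution''; this yields $\psi(V_{l}\cap T(E))=V_{l_{n_1},\ldots,l_{n_t}}\cap T_{\mathbb{Z}}(E^{d})$ and one then simply quotes Krakowski--Regev ($T(E)=\langle [x_1,x_2,x_3]\rangle_T$) and pushes the rewriting through $\psi$, exactly as in the proof of Proposition \ref{V1}. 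Your plan instead re-proves the Krakowski--Regev computation inside the graded free algebra: straightening into normal monomials, the relation $[x_{1},x_{2}][x_{3},x_{4}]\equiv -[x_{1},x_{3}][x_{2},x_{4}]$, the count $2^{n-1}$, and a separation-of-coefficients argument via parity-tuned substitutions. That is self-contained but duplicates work the reduction lets you cite; the reduction approach buys brevity, yours buys independence from the ungraded classification.

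Two points in your write-up need attention. First, for $E^{k^{\ast}}$ the reduction ``assume every $l_{i}\in Sup(E^{d})$'' is not enough: since $\{0,\ldots,k\}$ is not closed under addition, you must also dispose of the case $\sum_i l_i\notin\{0,\ldots,k\}$ (every monomial of $f$ then has out-of-support degree, so $f$ lies in the $T_{\mathbb{Z}}$-ideal generated by the degree generators); only after that is $\sum_i l_i\leq k$, which is precisely what makes it possible to choose the degree-carrying factors from the finite set $\{e_{1},\ldots,e_{k}\}$ with pairwise disjoint supports. Second, the step you yourself call the heart of the proof --- extracting an actual basis of the multilinear component of $F\langle X|\mathbb{Z}\rangle/I$ from the Pl\"ucker-type relations and verifying its linear independence under the Grassmann substitutions --- is described but not carried out; as written this is a plan for a proof rather than a proof, and it is exactly the part that the ``reduce to $T(E)$'' argument would let you avoid.
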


The description of the $\mathbb{Z}$-graded polynomial identities for $E^k$ is more complicated and can be found in  \cite{aagpk}.

Given a basic element (or monomial) $w=e_{i_1}\cdots e_{i_k}$ in $E$, we recall that the set $supp (w)=\{e_{i_1},\ldots , e_{i_k}\}$ is called \textit{support} of $w$. If $w_{1}=e_{j_1}\cdots e_{j_l}$ is in $E$, we say that $w$ and $w_{1}$ have \textit{pairwise disjoint supports} if $supp (w)\cap supp (w_{1})=\emptyset$. In this case we have that $ww_{1}\neq 0$. The length of $w$ is the cardinality of $supp (w)$, and we denote it by $|w|$. In the grading $E^{can}$, the $\mathbb{Z}$-degree of $w$ is exactly $|w|$. 

\begin{definition}
	A $G$-grading $E=\oplus_{g\in G}E_{g}$ on the Grassmann algebra is said to be:
	\begin{enumerate}
		\item [(a)] A homogeneous grading if each $e_{i}\in L$ is homogeneous in the grading.
		\item [(b)] A grading of full support if its support is all the group $G$.
	\end{enumerate}

\end{definition}

The Grassmann algebra $E$ admits infinitely many non-homogeneous $\mathbb{Z}_{2}$-gradings, as can be seen in \cite{aagpk2}. It is clear that $E^{k}$, $E^{k^\ast}$, and $E^{\infty}$ are homogeneous gradings on $E$, but these structures are not of full support. 

Throughout the rest of the paper, unless otherwise stated, all the gradings on $E$ will be considered homogeneous gradings.

We write $\mathbb{N}=\{1, 2, 3,\ldots\}$ and $\mathbb{N}_{0}=\mathbb{N}\cup \{0\}$.
Let $r\in\mathbb{N}$, $n_{1}<\ldots <n_{r}$, with each $n_{j}\in\mathbb{Z}$, and $v_{1},\ldots, v_{r}$, where $v_{j}\in\mathbb{N}$ or $v_{j}=\infty$, for $1\leq j\leq r$. Consider 
\[L=L_{n_{1}}^{v_{1}}\oplus\cdots\oplus L_{n_{r}}^{v_{r}}\]
a decomposition of $L$ in $r$ subspaces such that $v_{j}=\dim L_{n_{j}}^{v_{j}}$. Clearly we can assume that the generators $e_i$ of $L$ satisfy $e_i\in \cup_{j=1}^r L_{n_j}^{v_j}$. In other words we split the basis $e_1$, $e_2$, \dots{} of $L$ into $r$ disjoint sets. Given a generator $e_{k}\in L$, we define 
\[
\|e_{k}\|=n_{j}\quad \mbox{\textrm{ if and only if }}\quad  e_{k}\in L_{n_j}^{v_j}.
\]

We extend the degree for all monomial $e_{k_1}e_{k_2}\cdots e_{k_s}$ by
\[
\|e_{k_1}e_{k_2}\cdots e_{k_s}\|=\|e_{k_1}\|+\|e_{k_2}\|+\cdots +\|e_{k_s}\|.
\]
It provides us a structure of $\mathbb{Z}$-grading on $E$, denoted by $E_{(n_{1},\ldots, n_{r})}^{(v_{1},\ldots, v_{r})}$. In this case we say that $E_{(n_{1},\ldots, n_{r})}^{(v_{1},\ldots, v_{r})}$ is a \textit{$r$-induced} $\mathbb{Z}$-grading. We say that $n_{1},\ldots, n_{r}$ are the \textit{lower indexes} and $v_{1},\ldots, v_{r}$ are the \textit{upper indexes} of the grading.

\begin{remark}\label{induzindo em qualquer G}
	The procedure just described is more general. Let $G$ be a group, consider $g_{1},\ldots, g_{r}$ in $G$ and suppose that $g_{i}g_{j}=g_{j}g_{i}$, for all $i, j\in \{1, \ldots, r\}$. As above we consider 
	\[L=L_{g_{1}}^{v_{1}}\oplus\cdots\oplus L_{g_{r}}^{v_{r}}\]
	a decomposition of $L$ in $r$ subspaces such that $v_{j}=\dim L_{g_{j}}^{v_{j}}$. Then we can define a $G$-grading $E_{(g_{1},\ldots, g_{r})}^{(v_{1},\ldots, v_{r})}$ on $E$ exactly as before. Observe that the condition $g_{i}g_{j}=g_{j}g_{i}$, for all $i, j\in \{1, \ldots, r\}$ is essential.
\end{remark}

\begin{example}[The trivial case]
	If $r=1$, the respective $\mathbb{Z}$-graded polynomial identities for $E_{(n)}^{(\infty)}$ were described in \cite{aagpk}. It is easy to see that $E_{(n)}^{(\infty)}$ and $E_{(m)}^{(\infty)}$ are $\mathbb{Z}$-isomorphic if and only if $n=m$. 
\end{example}

\section{$2$-induced $\mathbb{Z}$-gradings  of full support}\label{start}

From now on we consider $r=2$ and the respective decomposition of $L$ has the form 
\[L=L_{m}^{u}\oplus L_{n}^{v}.\]

\subsection{A characterization}\label{sub characterization}

In this subsection we shall give conditions on the indexes $m$, $n$, $u$ and $v$ so that $Sup(E_{(m, n)}^{(u, v)})=\mathbb{Z}$. After that we shall describe the $\mathbb{Z}$-graded polynomial identities for all these structures. We call attention that the construction exposed in this subsection does not depend on the field $F$.

We start with a result that characterizes $Sup(E_{(m, n)}^{(u, v)})$ when $u=v=\infty$.

\begin{theorem}\label{corbom} 
	Let $L=L_{m}^{u}\oplus L_{n}^{v}$ be a decomposition as previously presented. Then $Sup(E_{(m, n)}^{(u, v)})=d\mathbb{Z}$ if and only if $u=v=\infty$, $m<0<n$ and $\gcd(m, n)=d.$
\end{theorem}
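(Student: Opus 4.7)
The plan is to reduce the statement to an elementary statement about the numerical semigroup generated by $m$ and $n$ under the constraints that at most $u$ copies of $m$ and at most $v$ copies of $n$ may be used.

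First I would observe that because $L = L_m^u\oplus L_n^v$ and the grading on $E$ is extended additively from the degrees on the generators, every basis monomial $e_{i_1}\cdots e_{i_s}\in B$ has $\mathbb{Z}$-degree of the form $am+bn$, where $a$ is the number of factors lying in $L_m^u$ and $b$ the number of factors lying in $L_n^v$. Conversely every such $am+bn$ with $0\le a\le u$ and $0\le b\le v$ is realized by an actual nonzero monomial (the generators of $L_m^u$ and $L_n^v$ are linearly independent and anticommute among themselves). Hence
\[
Sup(E_{(m,n)}^{(u,v)})=\{am+bn\,:\,0\le a\le u,\ 0\le b\le v\},
\]
with the convention that $a\le\infty$ imposes no upper bound.

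For the ($\Leftarrow$) direction, assume $u=v=\infty$, $m<0<n$ and $\gcd(m,n)=d$. The inclusion $Sup\subseteq d\mathbb{Z}$ is immediate because $d\mid m$ and $d\mid n$. For the reverse, given $k\in d\mathbb{Z}$, write $d=\gcd(n,-m)$; since $d\mid k$, the congruence $bn\equiv k\pmod{-m}$ has a solution $b_0\in\mathbb{Z}$, and in fact infinitely many of them, spaced by $(-m)/d$. Choose such a $b\ge 0$ with $bn\ge k$ (possible because $n>0$ and the solution set is unbounded above). Then $a:=(bn-k)/(-m)$ is a non-negative integer, and $am+bn=k$, exhibiting $k\in Sup$.

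For the ($\Rightarrow$) direction, assume $Sup=d\mathbb{Z}$. I would first rule out degenerate sign patterns: if $m,n\ge 0$ then $Sup\subseteq\mathbb{N}_0$, and if $m,n\le 0$ then $Sup\subseteq -\mathbb{N}_0$, so in either case $Sup$ cannot equal $d\mathbb{Z}$ (which contains arbitrarily large elements of both signs, since $m<n$ forces at least one of $m,n$ to be nonzero, hence $d\ne 0$). Combined with $m<n$ this forces $m<0<n$. Next, if $u<\infty$ then $am+bn\ge um$ for all admissible $a,b$, so $Sup$ is bounded below, a contradiction; symmetrically $v<\infty$ is impossible. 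Therefore $u=v=\infty$. Finally, applying the already-proved ($\Leftarrow$) direction with $D:=\gcd(m,n)$ in place of $d$ gives $Sup=D\mathbb{Z}$, whence $d\mathbb{Z}=D\mathbb{Z}$ and $d=D=\gcd(m,n)$.

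There is no serious obstacle; the only slightly delicate point is the existence of a non-negative $b$ in the required residue class modulo $-m$ in the ($\Leftarrow$) step, which is a routine Bézout/Chicken McNugget argument, and the book-keeping needed to discard the degenerate sign and finiteness cases in ($\Rightarrow$).
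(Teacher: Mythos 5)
Your proposal is correct and follows essentially the same route as the paper: identify the support with the set of non-negative combinations $am+bn$, rule out the cases $m,n\ge 0$, $m,n\le 0$ and $u<\infty$ or $v<\infty$ by boundedness of the support, and then use a B\'ezout-type argument exploiting $m<0<n$ to realize every multiple of $\gcd(m,n)$ with non-negative coefficients. The only cosmetic differences are that you solve the congruence $bn\equiv k \pmod{-m}$ directly for each $k\in d\mathbb{Z}$ (the paper only produces $\pm d$ as non-negative combinations and then multiplies monomials), and that you identify $d$ with $\gcd(m,n)$ by invoking the already-proved converse direction rather than by the paper's double-divisibility argument.
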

\begin{proof}
	Assume that $Sup(E_{(m, n)}^{(u, v)})=d\mathbb{Z}$ and $m<n$. 
	If $m, n\geq 0$, then it is clear that $Sup(E_{(m, n)}^{(u, v)})\subset \mathbb{N}_{0}$. If $m, n\leq 0$ we conclude that $Sup(E_{(m, n)}^{(u, v)})\subset \{-k\mid k\in\mathbb{N}_{0}\}$. 
	Hence it follows that $m<0<n$.
	If $u=\dim L_{m}^{u}<\infty$, we conclude that $Sup(E_{(m, n)}^{(u, v)})\subset \{a\in\mathbb{Z}\mid a\geq mu\}$ and a similar conclusion holds if $v=\dim L_{n}^{v}<\infty$. Moreover, as $m, n\in Sup(E_{(m, n)}^{(u, v)})=d\mathbb{Z}$, we have that $d^\prime=\gcd(m,n)\in d\mathbb{Z}$, since $d$ divide $m$ and $n$. Hence $d^\prime\mathbb{Z}\subseteq d\mathbb{Z}$. On the other hand $d\in Sup(E_{(m, n)}^{(u, v)})$ then there exist $r$ and $s$ natural numbers such that $d=mr+ns$ this implies that $d\in d^\prime\mathbb{Z}$. Hence $d=\gcd(m,n)$.
	
	Now if we consider $\gcd(m, n)=d$, then $\gcd(m/d, n/d)=1$. We have that $\overline{(m/d)}$ is a generator of the group $\mathbb{Z}_{n/d}$. Since $\mathbb{Z}_{n/d}$ is finite, there exist positive integers $\alpha$, $\alpha '$ such that $\alpha(\overline{m/d})=\overline{1}$ and $\alpha'(\overline{m/d})=-\overline{1}$ in $\mathbb{Z}_{n/d}$. Therefore, there exist integers $\beta$, $\beta'$ such that 
	\[1-\alpha (m/d) =\beta (n/d),\]
	\[-1-\alpha' (m/d) =\beta' (n/d).\]
	As $m<0$ we have $1-\alpha (m/d)\geq 0$ and $-1-\alpha' (m/d)\geq 0$, it follows that $\beta, \beta'\in\mathbb{N}_0$. Hence 
	\[d=\alpha m + \beta n,\]
	\[-d=\alpha' m + \beta' n.\]
	This and $u=v=\infty$ imply that $d\mathbb{Z}\subseteq Sup(E_{(m, n)}^{(u, v)})$. On the other hand, if $k\in Sup(E_{(m, n)}^{(u, v)})$, there exist $r$, $s\in \mathbb{N}_0$ such that $k=rm+sn$. It follows from definition of greatest common divisor that $d$ divide $k$, and hence we have the equality $d\mathbb{Z}=Sup(E_{(m, n)}^{(u, v)})$.
\end{proof}

The next result is a consequence of Theorem \ref{corbom} and it gives us a necessary and sufficient condition for the $\mathbb{Z}$-graded algebra $E_{(m, n)}^{(u, v)}$ to be of full support. 

\begin{corollary}\label{criterio full sup}
	Let $L=L_{m}^{u}\oplus L_{n}^{v}$ be a decomposition as previously presented. The following statements are equivalent:
	\begin{enumerate}
		\item $Sup(E_{(m, n)}^{(u, v)})=\mathbb{Z}$.
		\item $u=v=\infty$, $m<0<n$ and there exist $\alpha$, $\beta$, $\alpha '$ and $\beta '$ in $\mathbb{N}_{0}$ satisfying $1=\alpha m + \beta n$ and $-1=\alpha ' m + \beta ' n$.
		\item $u=v=\infty$, $m<0<n$ and $\gcd(m, n)=1.$
	\end{enumerate}
\end{corollary}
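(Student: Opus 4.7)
The plan is to treat this as a direct corollary of Theorem \ref{corbom}, with the only subtle point being the extraction of the explicit non-negative coefficients in condition (2). I would organize the proof as a cycle (1) $\Rightarrow$ (2) $\Rightarrow$ (3) $\Rightarrow$ (1), leaning heavily on the preceding theorem.

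For (1) $\Rightarrow$ (2): Assume $Sup(E_{(m,n)}^{(u,v)}) = \mathbb{Z}$. Since $\mathbb{Z} = 1 \cdot \mathbb{Z}$, Theorem \ref{corbom} already yields $u = v = \infty$, $m < 0 < n$, and $\gcd(m,n) = 1$ for free. It remains to extract the explicit coefficients. Since $1, -1 \in Sup(E_{(m,n)}^{(u,v)})$, the definition of the support together with $u = v = \infty$ gives monomials of $\mathbb{Z}$-degree $1$ and $-1$; counting how many generators from $L_m^u$ and $L_n^v$ appear in such monomials produces $\alpha, \beta, \alpha', \beta' \in \mathbb{N}_0$ with $1 = \alpha m + \beta n$ and $-1 = \alpha' m + \beta' n$.

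For (2) $\Rightarrow$ (3): A standard gcd argument. Any common divisor of $m$ and $n$ divides $\alpha m + \beta n = 1$, hence $\gcd(m,n) = 1$. The conditions $u = v = \infty$ and $m < 0 < n$ are preserved verbatim from (2).

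For (3) $\Rightarrow$ (1): This is exactly Theorem \ref{corbom} applied with $d = 1$: we obtain $Sup(E_{(m,n)}^{(u,v)}) = 1 \cdot \mathbb{Z} = \mathbb{Z}$.

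There is essentially no obstacle here, because all the number-theoretic heavy lifting (the use of $\bar m$ as a generator of $\mathbb{Z}_{n/d}$ and the sign argument that forces $\beta, \beta' \geq 0$ from $m < 0$) was already carried out inside the proof of Theorem \ref{corbom}. The only thing to be careful about is not leaving condition (2) looking weaker than it is: the requirement that the coefficients lie in $\mathbb{N}_0$ (rather than in $\mathbb{Z}$) is precisely what ties the Bezout-type identities to realizability inside the support of a grading whose homogeneous components are spanned by products of generators, so this non-negativity is where condition (2) earns its keep and must be verified explicitly in the (1) $\Rightarrow$ (2) step.
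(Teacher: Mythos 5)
Your proof is correct, but it closes the cycle of implications differently from the paper. The paper proves only $(2)\Rightarrow(1)$ explicitly: from $1=\alpha m+\beta n$ with $\alpha,\beta\in\mathbb{N}_{0}$ and $u=v=\infty$ one takes $k$ basis monomials of degree $1$ with pairwise disjoint supports and multiplies them to obtain a nonzero element of the component $A_k$ (and similarly for $k<0$); the remaining implications are absorbed into Theorem \ref{corbom} (the equivalence $(1)\Leftrightarrow(3)$ is the case $d=1$ of that theorem, and $(3)\Rightarrow(2)$ is precisely the construction of $\alpha,\beta,\alpha',\beta'\in\mathbb{N}_{0}$ carried out inside its proof). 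You instead run $(1)\Rightarrow(2)\Rightarrow(3)\Rightarrow(1)$ and never prove $(2)\Rightarrow(1)$ directly: your only new ingredient is the observation that a nonzero basis monomial of degree $\pm 1$ yields the non-negative Bezout coefficients by counting how many of its generators lie in $L_{m}^{u}$ versus $L_{n}^{v}$, after which $(2)\Rightarrow(3)$ is the trivial divisibility argument and $(3)\Rightarrow(1)$ is the reverse direction of Theorem \ref{corbom}. Both routes are sound. The paper's choice makes condition (2) visibly self-sufficient as a criterion for full support, since it exhibits how the non-negative coefficients actually realize every degree; yours is slightly more economical, because the disjoint-support product construction is invoked only once, inside the proof of Theorem \ref{corbom}, and you correctly identify that the non-negativity of the coefficients is the point where (2) carries real content.
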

\begin{proof}
It is enough to prove that ($2 \Rightarrow 1$). Let $k>0$ be an integer number and let $A_{k}$ be the respective component in the grading. Since $1=\alpha m + \beta n$ and $u=v=\infty$, we can take $k$ elements  $w_1,\ldots, w_k$ of $B_E$ with pairwise disjoint supports of degree $1$. Then $0\neq w_1\cdots w_k$ has degree $k$, that is, $ A_{k}\neq 0$. The case $k<0$ is treated similarly. 
\end{proof}

\begin{example}\label{ex -2, 3}
	Due to Corollary \ref{criterio full sup}, we can guarantee that $E_{(-100, 147)}^{(\infty, \infty)}$ and $E_{(-2, 3)}^{(\infty, \infty)}$ are of full support. Notice that
	\[1=1\times(-2) + 1\times 3,\]
	\[-1=2\times(-2)+1\times 3.\]
	If $A_{r}$ is the homogeneous component of degree $r$ in $E_{(-2, 3)}^{(\infty, \infty)}$, we observe that $A_1$ has infinitely many monomials of length 2 and $A_{-1}$ has infinitely many monomials of length 3. 
\end{example}

\begin{corollary}\label{muitos de mesmo length}
	Suppose that $E_{(m, n)}^{(u, v)}$ is of full support and let $A_{r}$ be the homogeneous component of degree $r$. Given a monomial $w\in A_{r}$, there exist infinitely many monomials with pairwise disjoint supports in $A_{r}$ whose length is equal to the length of $w$.
\end{corollary}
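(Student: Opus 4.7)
The plan is to use the characterization of full support from Corollary \ref{criterio full sup} to leverage the infinite dimensionality of both subspaces in the decomposition $L = L_m^u \oplus L_n^v$, and then construct the required monomials by direct replacement.

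First, I would invoke Corollary \ref{criterio full sup}: since $E_{(m,n)}^{(u,v)}$ is of full support, we automatically have $u = v = \infty$, so both $L_m^u$ and $L_n^v$ have infinite bases (say $\{f_1, f_2, \ldots\}$ and $\{g_1, g_2, \ldots\}$ respectively, chosen among the $e_i$'s). Next, given the monomial $w \in A_r$, write $w$ (up to a sign irrelevant to the argument) in the form $w = f_{i_1} \cdots f_{i_a} \, g_{j_1} \cdots g_{j_b}$, where the $f$-factors lie in $L_m^u$ and the $g$-factors lie in $L_n^v$. Then $|w| = a + b$ and $\|w\| = am + bn = r$.

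Now I would construct the sequence inductively. Having already constructed $w_1, \ldots, w_s$ with the desired properties (same length $a+b$, same degree $r$, pairwise disjoint supports that are also disjoint from $\mathrm{supp}(w)$), use only finitely many generators from $L_m^u$ and $L_n^v$, so there remain infinitely many $f_i$'s and $g_j$'s unused. Pick any $a$ fresh generators $f_{i_1'}, \ldots, f_{i_a'}$ from $L_m^u$ and any $b$ fresh generators $g_{j_1'}, \ldots, g_{j_b'}$ from $L_n^v$, and set
\[
w_{s+1} = f_{i_1'} \cdots f_{i_a'} \, g_{j_1'} \cdots g_{j_b'}.
\]
By construction $w_{s+1}$ has length $a+b = |w|$, lies in $A_{am+bn} = A_r$, and has support disjoint from all previously chosen supports. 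Iterating produces infinitely many such monomials.

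There is no serious obstacle here; the main point is simply that the hypothesis of full support forces $u = v = \infty$ via Corollary \ref{criterio full sup}, at which point the inductive construction is immediate. The only mild subtlety is keeping track that reordering the factors of $w$ into the form ``$f$'s first, then $g$'s'' only changes $w$ by a sign, which is harmless for the claim about length and homogeneous degree.
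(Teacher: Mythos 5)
Your proposal is correct and follows essentially the same route as the paper: both use that full support forces $u=v=\infty$, write $w$ as a product of generators from $L_m^{\infty}$ and $L_n^{\infty}$, and then replace these by fresh generators to produce infinitely many monomials of the same length and degree with pairwise disjoint supports. The paper states this replacement in one line, while you spell it out as an induction, but the argument is the same.
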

\begin{proof}
	Assume that  
	\[L_{m}^{ \infty}=span_{F}\{e_{1}, e_{3}, e_{5},\ldots\},\] 
	\[L_{n}^{ \infty}=span_{F}\{e_{2}, e_{4}, e_{6},\ldots\},\]
		\[w=e_{i_1}\cdots e_{i_l}e_{j_1}\cdots e_{j_k},\]
	where $i_{1},\ldots, i_{l}$ are odd numbers and $j_{1},\ldots, j_{k}$ are even numbers.
	
	In this case for any choice of $l$ odd integers and $k$ even integers we can construct a monomial in $A_{r}$ with the same length of $w$.
\end{proof}


	
\begin{remark}\label{weakisoid}
	Let $G$ and $H$ be groups and let $A=\oplus_{g\in G}A_{g}$ and  $B=\oplus_{h\in H}B_h$ be algebras graded by the groups $G$ and $H$, respectively. If $A$ and $B$ are weakly isomorphic with respect to the isomorphism of groups $\rho\colon G\to H$ then a polynomial $f\in F\langle X|G \rangle$ is a graded identity for $A$ if and only if $\Phi (f)$ is a graded identity for $B$, where $\Phi: F\langle X|G \rangle \rightarrow F\langle X|H \rangle$ is the isomorphism such that $\Phi (x_{i}^{g})=x_{i}^{\rho(g)}$. Moreover if $S$ is a basis for the $T_G$-ideal $T_G(A)$ then $\Phi(S)$ is a basis for the $T_H$-ideal $T_H(B)$.
\end{remark}

Given $d\in \mathbb{Z}$, we consider the map $\Phi_d\colon F\langle X|\mathbb{Z}\rangle\to F\langle X|d\mathbb{Z}\rangle$ defined by $\Phi_d(x_i^n)=x_i^{dn}$. If $d\neq 0$, it is clear that the map $\rho\colon \mathbb{Z}\to d\mathbb{Z}$ given by $\rho(n)=dn$ is an isomorphism of groups. Due to these comments and Remark \ref{weakisoid}, we obtain the following lemma.
\begin{lema}\label{good}
	Let $f\in F\langle X|\mathbb{Z}\rangle$ be a multilinear polynomial and $m<0<n$.  Then $f\in T_{\mathbb{Z}}(E_{(\frac{m}{d}, \frac{n}{d})}^{(\infty, \infty)})$ if and only if $\varPhi_d(f)\in T_{d\mathbb{Z}}(E_{(m, n)}^{(\infty, \infty)})$, where $d=\gcd(m,n)$.
\end{lema}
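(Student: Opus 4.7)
The plan is to exhibit a weak isomorphism between the two $\mathbb{Z}$-graded (respectively, $d\mathbb{Z}$-graded) algebras in question and then invoke Remark \ref{weakisoid}, from which the statement follows essentially by unraveling definitions.

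First I would note the supports. Since $m/d < 0 < n/d$ with $\gcd(m/d, n/d) = 1$, Corollary \ref{criterio full sup} guarantees that $E_{(m/d,\,n/d)}^{(\infty,\infty)}$ has full support $\mathbb{Z}$. On the other side, Theorem \ref{corbom} yields $\mathrm{Sup}\bigl(E_{(m,n)}^{(\infty,\infty)}\bigr) = d\mathbb{Z}$. So the gradings live over $\mathbb{Z}$ and $d\mathbb{Z}$ respectively, and the group map $\rho \colon \mathbb{Z} \to d\mathbb{Z}$, $\rho(g) = dg$, is a group isomorphism.

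Next, I would fix a single splitting of the canonical generating set $\{e_1, e_2, \dots\}$ into two disjoint infinite parts, giving a decomposition $L = L' \oplus L''$, and use this same decomposition in both gradings: set $L_{m/d}^{\infty} = L_m^{\infty} = L'$ and $L_{n/d}^{\infty} = L_n^{\infty} = L''$. With this choice, the underlying algebras coincide, so $\varphi = \mathrm{id} \colon E \to E$ is an algebra isomorphism; and for any basis monomial $w = e_{i_1} \cdots e_{i_k}$ containing $a$ factors from $L'$ and $b$ factors from $L''$, its degree in the first grading is $a(m/d) + b(n/d) = g$, while its degree in the second grading is $am + bn = d g$. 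Hence $\varphi$ sends the homogeneous component of degree $g$ of $E_{(m/d,\,n/d)}^{(\infty,\infty)}$ to the homogeneous component of degree $\rho(g) = dg$ of $E_{(m,n)}^{(\infty,\infty)}$. Thus $(\rho, \varphi)$ is a weak isomorphism of graded algebras.

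Finally, Remark \ref{weakisoid} gives the isomorphism $\Phi \colon F\langle X|\mathbb{Z}\rangle \to F\langle X|d\mathbb{Z}\rangle$ determined by $\Phi(x_i^g) = x_i^{\rho(g)} = x_i^{dg}$, which is exactly the map $\Phi_d$ defined in the statement; and it asserts that a polynomial $f$ lies in $T_{\mathbb{Z}}\bigl(E_{(m/d,\,n/d)}^{(\infty,\infty)}\bigr)$ if and only if $\Phi_d(f)$ lies in $T_{d\mathbb{Z}}\bigl(E_{(m,n)}^{(\infty,\infty)}\bigr)$, yielding the conclusion. The argument does not really require $f$ to be multilinear; the hypothesis is presumably included because the lemma will later be applied to transfer multilinear generators of the relative ideals. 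The only conceptual point to verify with care is the matching of homogeneous components under $\varphi$, and this reduces to the trivial observation that scaling all degree labels by $d$ preserves the grading structure, so I do not anticipate any serious obstacle.
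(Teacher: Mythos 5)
Your proposal is correct and follows essentially the same route as the paper: the authors derive the lemma directly from Remark \ref{weakisoid} together with the group isomorphism $\rho(g)=dg$, exactly the weak isomorphism you construct (they simply omit the explicit verification that the identity map on $E$ matches the homogeneous components, which you supply). No discrepancies.
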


Hence if we consider $E_{(m, n)}^{(\infty, \infty)}$ as a $d\mathbb{Z}$-graded algebra, its $d\mathbb{Z}$-graded identities can be obtained from the identities of the $\mathbb{Z}$-graded algebra $E_{(\frac{m}{d}, \frac{n}{d})}^{(\infty, \infty)}$.  


We shall present a relation between $T_{\mathbb{Z}}(E_{(m, n)}^{(\infty, \infty)})$ and $T_{\mathbb{Z}}(E_{(\frac{m}{d}, \frac{n}{d})}^{(\infty, \infty)})$, where $d=\gcd(m,n)$. To this end we define the homomorphism $\Psi_d \colon F\langle X|d\mathbb{Z}\rangle\to F\langle X|\mathbb{Z}\rangle$ given by $\Psi_d(x_i^{dn})=x_i^{dn}$. Note that this homomorphism is not inclusion, because even though they appear to be the same variables they are in distinct free algebras. 

\begin{theorem}\label{idquasefull}
	Let $m$ and $n$ be integer numbers, with $m<0<n$ and $d=\gcd(m,n)$. If $S$ is a basis for the $T_\mathbb{Z}$-ideal $T_{\mathbb{Z}}(E_{\left( \frac{m}{d},\frac{n}{d}\right) }^{(\infty, \infty)})$, then the $T_\mathbb{Z}$-ideal $T_{\mathbb{Z}}(E_{(m, n)}^{(\infty, \infty)})$ is generated by the set $S^\prime\cup N$, where 
	$$S^\prime=\{\Psi_d(\varPhi_d(f))\mid f\in S\}$$
	and
	$$N=\{x\in F\langle X|\mathbb{Z}\rangle\mid \alpha(x)\notin d\mathbb{Z}\}.$$
\end{theorem}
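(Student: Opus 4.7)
The plan is to establish the two inclusions between $I := \langle S' \cup N\rangle_{T_{\mathbb{Z}}}$ and $J := T_{\mathbb{Z}}(E_{(m,n)}^{(\infty,\infty)})$. For $N \subseteq J$, I invoke Theorem \ref{corbom}: the support is $d\mathbb{Z}$, so any variable of degree $k \notin d\mathbb{Z}$ is forced to be evaluated at $0 \in (E_{(m,n)}^{(\infty,\infty)})_{k} = 0$. For $S' \subseteq J$, I take a multilinear $f \in S$; by Lemma \ref{good}, $\Phi_d(f) \in T_{d\mathbb{Z}}(E_{(m,n)}^{(\infty,\infty)})$. The homogeneous components of $E_{(m,n)}^{(\infty,\infty)}$ regarded as a $\mathbb{Z}$-graded algebra coincide with those of the $d\mathbb{Z}$-graded algebra at degrees in $d\mathbb{Z}$ and are zero elsewhere, so the polynomial $\Psi_d(\Phi_d(f))$, which only sees variables of degree in $d\mathbb{Z}$, vanishes on $E_{(m,n)}^{(\infty,\infty)}$ as a $\mathbb{Z}$-graded algebra.

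For the reverse inclusion, I pick an arbitrary multilinear $g \in J$ (sufficient because $\mathrm{char}\, F = 0$). If some variable $x_i^k$ with $k \notin d\mathbb{Z}$ appears in $g$, multilinearity forces it into every monomial, so $g$ lies in the two-sided ideal generated by $x_i^k \in N$ and hence in $I$. Assuming all variables of $g$ have degrees in $d\mathbb{Z}$, there is a unique multilinear $h \in F\langle X|d\mathbb{Z}\rangle$ with $g = \Psi_d(h)$; by the coincidence of components discussed above, $h \in T_{d\mathbb{Z}}(E_{(m,n)}^{(\infty,\infty)})$. Applying $\Phi_d^{-1}$ (a bijection of $\mathbb{Z}$-graded free algebras because $n \mapsto dn$ is a group isomorphism $\mathbb{Z} \cong d\mathbb{Z}$) and using Lemma \ref{good}, the polynomial $f := \Phi_d^{-1}(h)$ lies in $T_{\mathbb{Z}}(E_{(m/d,n/d)}^{(\infty,\infty)})$, and so is a $T_{\mathbb{Z}}$-consequence of $S$.

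The delicate step, and what I expect to be the main obstacle, is transferring a $T_{\mathbb{Z}}$-consequence relation through $\Psi_d \circ \Phi_d$, since this map is injective but not surjective. Writing $f = \sum_i u_i\, \sigma_i(s_i)\, v_i$ with $s_i \in S$ and $\sigma_i$ a graded endomorphism of $F\langle X|\mathbb{Z}\rangle$, I construct graded endomorphisms $\tau_i$ of $F\langle X|\mathbb{Z}\rangle$ by setting $\tau_i(x_j^{d\ell}) := (\Psi_d \circ \Phi_d)\bigl(\sigma_i(x_j^{\ell})\bigr)$ for all $\ell \in \mathbb{Z}$ (which has degree $d\ell$, as required) and extending arbitrarily but degree-preservingly on variables of degree outside $d\mathbb{Z}$, say by the identity. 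Because each variable $x_j^{\ell}$ occurring in $s_i$ is mapped to the same element by the two procedures ``$\sigma_i$ then $\Psi_d \circ \Phi_d$'' and ``$\Psi_d \circ \Phi_d$ then $\tau_i$'', one gets $(\Psi_d \circ \Phi_d)(\sigma_i(s_i)) = \tau_i\bigl((\Psi_d \circ \Phi_d)(s_i)\bigr)$, and hence
\[
g \;=\; \Psi_d(h) \;=\; (\Psi_d \circ \Phi_d)(f) \;=\; \sum_i (\Psi_d \circ \Phi_d)(u_i)\cdot \tau_i\bigl((\Psi_d \circ \Phi_d)(s_i)\bigr)\cdot (\Psi_d \circ \Phi_d)(v_i),
\]
exhibiting $g$ as a $T_{\mathbb{Z}}$-consequence of $S'$. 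This completes $J \subseteq I$, and together with the first paragraph proves the theorem.
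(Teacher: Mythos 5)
Your proposal is correct and follows essentially the same route as the paper: both inclusions are handled via Theorem \ref{corbom} for $N$, Lemma \ref{good} plus the identification of homogeneous components to pass between the $\mathbb{Z}$- and $d\mathbb{Z}$-graded settings, and then a transfer of the $T_{\mathbb{Z}}$-consequence relation through $\Psi_d\circ\Phi_d$. Your explicit construction of the endomorphisms $\tau_i$ makes rigorous the step that the paper's proof performs implicitly when it rewrites $f=\sum_j G^j_1[\Psi_d(\varPhi_d(t_j))(H^j_1,\ldots,H^j_{p_j})]G^j_2$, but the underlying argument is the same.
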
 
\begin{proof}
	Let $f$ be an element in $S^\prime$. There exists $g\in S$ such that $\Psi_d(\varPhi_d(g))=f$. By the lemma mentioned above, we have $\varPhi_d(g)\in T_{d\mathbb{Z}}(E_{(m, n)}^{(\infty, \infty)})$. Notice that the elements in $Sup(E_{(m, n)}^{(\infty, \infty)})$ in $d\mathbb{Z}$ can be seen in $\mathbb{Z}$, and hence we have that the elements of degree $i$ in $Sup(E_{(m, n)}^{(\infty, \infty)})$ (in $d\mathbb{Z}$) continue to have degree $i$ in $\mathbb{Z}$. We conclude that $\Psi_d(\varPhi_d(g))\in T_{\mathbb{Z}}(E_{(m, n)}^{(\infty, \infty)})$. Moreover it is clear that $N \subseteq T_{\mathbb{Z}}(E_{(m, n)}^{(\infty, \infty)})$. Hence $\langle S^\prime\cup N\rangle_{T_{\mathbb{Z}}}\subseteq T_{\mathbb{Z}}(E_{(m, n)}^{(\infty, \infty)})$.
	
	Now we consider $f=f(x_1^{l_1},\ldots, x_r^{l_r})\in T_{\mathbb{Z}}(E_{(m, n)}^{(\infty, \infty)})$ multilinear. If $l_i\in \mathbb{Z}\setminus Sup(E_{(m, n)}^{(\infty, \infty)})$, for some $i=1,\ldots, r$, then $f\in \langle S^\prime\cup N\rangle_{T_{\mathbb{Z}}}$. Hence we can suppose $l_i\in Sup(E_{(m, n)}^{(\infty, \infty)})$,  for every $i=1$, \ldots, $r$. In this case, for each $i$, there exists $s_i\in \mathbb{Z}$ such that $l_i=ds_i$. 
	In this context, we can consider, without loss of generality, that $f\in F\langle X|d\mathbb{Z}\rangle\cap T_{\mathbb{Z}}(E_{(m, n)}^{(\infty, \infty)})=T_{d\mathbb{Z}}(E_{(m, n)}^{(\infty, \infty)})$. Lemma \ref{good} implies $f=\varPhi_d(g)$, where $g=f(x_1^{s_1},\ldots, x_r^{s_r})$
	and $g\in T_{\mathbb{Z}}(E_{(\frac{m}{d}, \frac{n}{d})}^{(\infty, \infty)})$. As $T_{\mathbb{Z}}(E_{(\frac{m}{d}, \frac{n}{d})}^{(\infty, \infty)})$ is generated by $S$, then there exist $g^j_1$, $g^j_2$, $h^j_1$, \ldots, $h^j_{p_j} \in F\langle X|\mathbb{Z}\rangle$ and $t_1$, \ldots, $t_u\in S$ such that
	$$g=\sum_j g^j_1t_j(h^j_{1},\ldots,h^j_{p_j}) g^j_{2}.$$
	In this case
	$$f=\Psi_d(\varPhi_d(g))=\sum_j G^j_1[\Psi_d(\varPhi_d(t_j))(H^j_{1},\ldots,H^j_{p_j})] G^j_{2}.$$
	where $G^j_1, G^j_2, H^j_{1}, \ldots, H^j_{p_j}\in F\langle X|\mathbb{Z}\rangle$ with $G^j_i=\Psi_d(\varPhi_d(g^j_i))$ and $H^j_k=\Psi_d(\varPhi_d(h^j_k))$, for $i=1,2$ and $k=1,\ldots, p_j$. Therefore we have that $f\in \langle S^\prime\cup N\rangle_{T_{\mathbb{Z}}}$, and we are done. 
\end{proof}

The last theorem implies that to determine the $\mathbb{Z}$-graded identities for an algebra of type $E_{(m, n)}^{(\infty, \infty)}$, when $m<0<n$, it is sufficient to consider the case in which such structure is of full support, namely the case where $\gcd(m, n)=1$. 

 Suppose that $Sup(E_{(m, n)}^{(\infty, \infty)})=\mathbb{Z}$. By Corollary \ref{criterio full sup} there exist $\alpha$, $\beta$, $\alpha '$ and $\beta '$ in $\mathbb{N}_{0}$ satisfying 
 \[1=\alpha m + \beta n\]
 \[-1=\alpha ' m + \beta ' n.\]
 
 We fix $\alpha$, $\beta$, $\alpha'$  and $\beta'$ satisfying these equalities such that the sums $\alpha +\beta$ and $\alpha' +\beta'$ are minimal. We say that $\alpha +\beta$ and $\alpha' +\beta'$ are the \textit{coefficients} of the grading. We have to take into account the following four types:
 \begin{enumerate}
 	\item [(1)] Both $\alpha + \beta$ and $\alpha' + \beta'$ are even.
 	\item [(2)] $\alpha + \beta$ is even and $\alpha' + \beta'$ is odd.
 	\item [(3)] $\alpha + \beta$ is odd and $\alpha' + \beta'$ is even.
 	\item [(4)] Both $\alpha + \beta$ and $\alpha' + \beta'$ are odd.
 \end{enumerate}
For instance, the grading of $E_{(-2, 3)}^{(\infty, \infty)}$ given in Example \ref{ex -2, 3} is of type $(2)$. 
\begin{remark}\label{the pi-equivalence}
	Let $m<0<n$ be integers. If both $m$ and $n$ are odd then $E_{(-1, n)}^{(\infty, \infty)}$ and $E_{(m, 1)}^{(\infty, \infty)}$ are of type $(4)$. In fact, for $E_{(-1, n)}^{(\infty, \infty)}$ we have that
	\[\alpha + \beta=n,\]
	\[\alpha' + \beta'=1.\]
	And similarly for $E_{(m, 1)}^{(\infty, \infty)}$. 
	If $n$ is even then $E_{(-1, n)}^{(\infty, \infty)}$ is of type $(2)$. If $m$ is even then $E_{(m, 1)}^{(\infty, \infty)}$ is of type $(3)$. 
\end{remark}

  \subsection{Type (1)}
  
  Here and in the rest of the paper, we shall consider the ground field $F$ of characteristic zero. It is possible to obtain similar results when $F$ is infinite of characteristic $p>2$, but the arguments and notations become rather clumsy. 
  
  Let $E_{(m, n)}^{(\infty, \infty)}$ be a structure of type (1).
  Let  $\mathfrak{V}_{1}^\mathbb{Z}$ be the variety  of $\mathbb{Z}$-graded algebras define by
  \begin{center}
  	\begin{enumerate} 		
  		\item $[x_{1}, x_{2}, x_{3}]$, for all degree of $x_{1}$, $x_{2}$, $x_{3}$. 
  	\end{enumerate} 
  \end{center}
  
  Obviously we have that $E_{(m, n)}^{(\infty, \infty)}\in \mathfrak{V}_{1}^\mathbb{Z}$. Besides $E^{\infty}\in \mathfrak{V}_{1}^\mathbb{Z}$ but it does not generate $\mathfrak{V}_{1}^\mathbb{Z}$.
  
  \begin{proposition}\label{V1}
  	Let $E_{(m, n)}^{(\infty, \infty)}$ be a structure of type $(1)$. Then $E_{(m, n)}^{(\infty, \infty)}$ generates the variety $\mathfrak{V}_{1}^\mathbb{Z}$. 
  \end{proposition}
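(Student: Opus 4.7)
The inclusion $E_{(m,n)}^{(\infty,\infty)}\in\mathfrak{V}_1^\mathbb{Z}$ is immediate since $[x_1,x_2,x_3]$ already vanishes on the ungraded Grassmann algebra $E$, hence $T_{\mathbb{Z}}(\mathfrak{V}_1^\mathbb{Z})\subseteq T_{\mathbb{Z}}(E_{(m,n)}^{(\infty,\infty)})$. My plan is to prove the reverse containment. In characteristic zero this reduces to multilinear components, and modulo $[x_1,x_2,x_3]$ the derivation rule $[ab,c]=[a,c]b+a[b,c]$ together with the centrality of commutators rewrite every multilinear polynomial $f(x_1^{l_1},\ldots,x_n^{l_n})$ in the canonical form
\[
f\equiv\sum_{(I,P)}c_{I,P}\,x_{i_1}^{l_{i_1}}\cdots x_{i_k}^{l_{i_k}}\,[x_{j_1}^{l_{j_1}},x_{j_2}^{l_{j_2}}]\cdots[x_{j_{2s-1}}^{l_{j_{2s-1}}},x_{j_{2s}}^{l_{j_{2s}}}],
\]
where $I=\{i_1<\cdots<i_k\}$ and $P=\{\{j_{2t-1}<j_{2t}\}\}_{t=1}^{s}$ is an ordered matching of the complement with $j_1<j_3<\cdots<j_{2s-1}$. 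The task then becomes: if this polynomial is a graded identity of $E_{(m,n)}^{(\infty,\infty)}$, every $c_{I,P}$ vanishes.

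The essential structural input coming from type $(1)$ is that each homogeneous component of $E_{(m,n)}^{(\infty,\infty)}$ contains basis monomials of $E$ of both even and of odd length, in infinite supply. Indeed, the parities $\alpha+\beta$ and $\alpha'+\beta'$ even together with $\gcd(m,n)=1$ force $m$ and $n$ to have opposite parities; then replacing a representation $l=\alpha_l m+\beta_l n$ by $l=(\alpha_l+kn)m+(\beta_l-km)n$ (which is in $\mathbb{N}_0^2$ for every $k\ge 0$ since $m<0<n$) flips the parity of $\alpha_l+\beta_l$, yielding elements of $(E_{(m,n)}^{(\infty,\infty)})_l$ of either parity of length in $E$.

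Using this, I will isolate each $c_{I_0,P_0}$ by a generic substitution. For every $i$ fix basis monomials $u_i,v_i\in B_E$ of degree $l_i$, with $u_i$ of odd length and $v_i$ of even length, all chosen with pairwise disjoint supports; then substitute $x_i^{l_i}\mapsto \alpha_i u_i+\beta_i v_i$ for formal scalars $\alpha_i,\beta_i$. Because each $v_i$ is central in $E$ and two distinct $u_i$'s anticommute, each commutator evaluates as $[x_j^{l_j},x_{j'}^{l_{j'}}]\mapsto 2\alpha_j\alpha_{j'}u_j u_{j'}$. Expanding $f$ in the $\alpha_i,\beta_i$ and extracting the coefficient of $\prod_{i\in I_0}\beta_i\prod_{i\notin I_0}\alpha_i$ kills every canonical monomial with $I\neq I_0$; within the surviving terms, different matchings $P$ of $\{1,\ldots,n\}\setminus I_0$ produce products of the same multiset of $u_i$'s in different orders and thus differ only by sign patterns determined by $P$. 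These sign patterns are pairwise distinct in the canonical basis of $E$, which forces $c_{I_0,P_0}=0$.

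The main obstacle is precisely this last linear-independence claim: that the images in $E$ of the various $m_{I_0,P}$ (as $P$ ranges over ordered matchings of $\{1,\ldots,n\}\setminus I_0$) form a linearly independent family. This is a purely combinatorial statement about how the ordering conventions $j_{2t-1}<j_{2t}$ and $j_1<j_3<\cdots<j_{2s-1}$ of the canonical form interact with the anticommutation moves needed to reorder the $u_i$'s into the fixed basis order of $E$; my approach is to prove it by induction on $s=|P|$, reducing at each step to the standard non-vanishing of a product of distinct generators of $L$ in the Grassmann algebra.
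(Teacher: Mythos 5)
Your structural input is the right one, and you prove it correctly: for a type $(1)$ grading the lower indexes $m$ and $n$ have opposite parities, and the shift $(\alpha_l,\beta_l)\mapsto(\alpha_l+kn,\beta_l-km)$ shows every homogeneous component contains basis monomials of both even and odd length in infinite supply with disjoint supports. This is exactly Lemma \ref{A0 inf even and odd1} of the paper. Where you diverge is in what you do with it: the paper uses this fact only to establish the correspondence $\psi(V_l\cap T(E))=V_{l_{n_1},\ldots,l_{n_t}}\cap T_{\mathbb{Z}}(E_{(m,n)}^{(\infty,\infty)})$ of Remark \ref{correspon idem} (a graded multilinear substitution is, up to disjoint-support monomials of prescribed parity, an arbitrary ordinary one), and then simply quotes the Krakowski--Regev/Latyshev theorem that $T(E)=\langle[x_1,x_2,x_3]\rangle_T$ and pushes the resulting expression through $\psi$. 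You instead attempt to re-prove that theorem inside the graded setting, and that is where the argument breaks.

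The linear-independence claim you flag as ``the main obstacle'' is in fact false, so the proposed induction cannot close the gap. For a fixed set $\{1,\ldots,n\}\setminus I_0$, the monomials $m_{I_0,P}$ for different matchings $P$ are \emph{not} independent modulo $\langle[x_1,x_2,x_3]\rangle_T$: one has, e.g., $[x_1,x_2][x_3,x_4]\equiv -[x_1,x_3][x_2,x_4]$ modulo the triple commutator, so the various $m_{I_0,P}$ coincide up to sign already in the relatively free algebra, and a fortiori their evaluations in $E$ under your substitution are all $\pm 2^{s}\prod_{i\in I_0}v_i\prod_{j\notin I_0}u_j$ --- scalar multiples of a single basis element of $E$. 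No substitution can separate them, and the coefficients $c_{I,P}$ in your (redundant) spanning set are not even well defined, so ``every $c_{I_0,P_0}$ vanishes'' is not the right target. A second, smaller slip: extracting the coefficient of $\prod_{i\in I_0}\beta_i\prod_{i\notin I_0}\alpha_i$ does not kill all terms with $I\neq I_0$, only those with $I\not\supseteq I_0$, since a variable $i\in I\setminus I_0$ sitting outside the commutators may contribute $\alpha_iu_i$. Both defects are repairable by standard means: first use the skew-symmetry consequence above to reduce to the normal form with exactly one monomial per even subset $J$ (commutator entries listed in increasing order), so that $P_n$ modulo the triple commutator is spanned by $2^{n-1}$ monomials indexed by $J$; then pick $J_0$ \emph{minimal} with nonzero coefficient and substitute odd-length monomials for the variables in $J_0$ and even-length (central) ones for the rest --- minimality, not a generic coefficient extraction, is what isolates $c_{J_0}$. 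With that repair your argument becomes a self-contained proof; as written, the key step fails.
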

  
  To prove Proposition \ref{V1} it is necessary to understand what happens in each homogeneous component of $E_{(m, n)}^{(\infty, \infty)}=\bigoplus_{r\in\mathbb{Z}}A_{r}$.
  
  \begin{lema}\label{m, n odd implies type 4}
  	If $m$ and $n$ are odd integers then $E_{(m, n)}^{(\infty, \infty)}$ is of type $(4)$.
  \end{lema}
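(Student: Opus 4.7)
The proof is essentially a parity check modulo $2$. The plan is to observe that if $m$ and $n$ are both odd, then any nonnegative solution $(\alpha,\beta)$ of $1=\alpha m+\beta n$ must satisfy $\alpha+\beta$ odd, because reducing the equation modulo $2$ gives
\[
1 \equiv \alpha\cdot 1 + \beta\cdot 1 \equiv \alpha+\beta \pmod{2}.
\]
The very same argument, applied to $-1=\alpha'm+\beta'n$, yields $\alpha'+\beta'\equiv 1\pmod{2}$, since $-1\equiv 1\pmod{2}$.

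Since this congruence holds for \emph{every} nonnegative integer solution of the two equations, it holds in particular for the solutions with minimal sums $\alpha+\beta$ and $\alpha'+\beta'$ that were fixed to define the coefficients of the grading. Hence both coefficients are odd, which by definition places $E_{(m,n)}^{(\infty,\infty)}$ in type $(4)$.

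I do not expect any genuine obstacle here: the result is a direct parity observation, and the existence of nonnegative solutions $\alpha,\beta,\alpha',\beta'\in\mathbb{N}_{0}$ is already guaranteed by Corollary \ref{criterio full sup}, since $\gcd(m,n)=1$ is forced by the fact that $E_{(m,n)}^{(\infty,\infty)}$ is of full support (which is the standing hypothesis in this subsection). The only thing to double-check is that the minimality of $\alpha+\beta$ and $\alpha'+\beta'$ does not interfere with the argument, and it does not, precisely because the parity conclusion applies uniformly to all solutions.
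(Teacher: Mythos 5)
Your proof is correct and is essentially the same as the paper's: the paper also reduces the equations $1=\alpha m+\beta n$ and $-1=\alpha' m+\beta' n$ modulo $2$ (phrased as ``$\alpha$ and $\beta$ have contrary parities''), concluding that both $\alpha+\beta$ and $\alpha'+\beta'$ are odd. Your extra remark that the parity conclusion holds for every nonnegative solution, hence in particular for the minimal one, is a welcome clarification but not a different method.
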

  \begin{proof}
  	In fact, the equalities
  	\[1=\alpha m + \beta n,\]
  	\[-1=\alpha ' m + \beta ' n\]
  	imply that $\alpha$ and $\beta$ have contrary parities (and also $\alpha'$ and $\beta'$). Then both $\alpha +\beta$ and $\alpha' +\beta'$ are odd, hence $E_{(m, n)}^{(\infty, \infty)}$ is of type $(4)$.
  \end{proof}

  If $E_{(m, n)}^{(\infty, \infty)}=\bigoplus_{r\in\mathbb{Z}}A_{r}$ is of type $(1)$, due to Lemma \ref{m, n odd implies type 4}, we have that $m\times n$ is even. Therefore, as $\gcd(m, n)=1$, it follows that $-m+n$ is odd.
  
  \begin{lema}\label{A0 inf even and odd1}
  	The component $A_{0}$ has infinitely many elements of $B_E$ with even and odd length with pairwise disjoint supports. Consequently the same property is valid for each component $A_{r}$, for all $r\in\mathbb{Z}$.
  \end{lema}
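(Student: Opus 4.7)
My plan would start from the parity observation already flagged in the paragraph preceding the lemma: in type $(1)$, Lemma \ref{m, n odd implies type 4} rules out $m, n$ both odd, and combined with $\gcd(m,n)=1$ this forces exactly one of $m, n$ to be even, so $n-m$ is odd.

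Next I would classify the basic monomials that lie in $A_0$. A monomial $e_{i_1}\cdots e_{i_s}\in B_E$ sits in $A_0$ iff, writing $a$ and $b$ for the numbers of factors drawn from $L_m^{\infty}$ and $L_n^{\infty}$ respectively, $am+bn=0$. Using $\gcd(m,n)=1$ together with $m<0<n$, this forces $(a,b)=(kn,-km)$ for some $k\in\mathbb{N}_0$. Consequently the length of any $A_0$-monomial is $k(n-m)$. Because $n-m$ is odd, $k=1$ produces odd lengths and $k=2$ produces even lengths.

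Now I would exploit $u=v=\infty$: partition the basis of $L_m^{\infty}$ into disjoint blocks of size $n$ and the basis of $L_n^{\infty}$ into disjoint blocks of size $-m$, and form, for each pair of blocks, the product of its $n+(-m)=n-m$ generators. This yields infinitely many basic monomials in $A_0$ of odd length $n-m$ with pairwise disjoint supports. The analogous construction with blocks of sizes $2n$ and $-2m$ produces infinitely many elements of $A_0$ of even length $2(n-m)$ with pairwise disjoint supports. This settles the claim for $A_0$.

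For the consequence about an arbitrary $A_r$, I would fix any $w\in B_E\cap A_r$, which exists by full support. From the construction above I can pick a single even-length $u_e\in A_0$ and a single odd-length $u_o\in A_0$, each with support disjoint from the finite support of $w$ (only finitely many of the infinitely many candidates can meet $\mathrm{supp}(w)$). The products $wu_e$ and $wu_o$ are nonzero basic monomials in $A_r$ whose lengths have opposite parities, and a single application of Corollary \ref{muitos de mesmo length} to each yields the two desired infinite families with pairwise disjoint supports. I do not expect a real obstacle: the content is bookkeeping via $\gcd(m,n)=1$, and the only genuine idea is recognizing that the lengths occurring in $A_0$ form precisely the set $(n-m)\mathbb{N}_0$, so that both parities are realized exactly when $n-m$ is odd.
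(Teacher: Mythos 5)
Your proposal is correct. It follows the paper's broad outline---produce basic monomials of both parities in $A_0$, then push them into an arbitrary $A_r$ by multiplying a fixed monomial $w\in A_r$ by disjoint-support elements of $A_0$ and invoking Corollary \ref{muitos de mesmo length}---but it differs in how the even-length elements of $A_0$ are obtained. The paper uses the defining property of type $(1)$ directly: since $\alpha+\beta$ and $\alpha'+\beta'$ are even, $A_1$ and $A_{-1}$ contain infinitely many disjoint monomials of even length, and $A_{-1}A_1\subset A_0$ yields the even-length family in $A_0$; the odd-length family then comes from $0=n\cdot m+(-m)\cdot n$, exactly as in your argument. You instead classify all lengths occurring in $A_0$ as $(n-m)\mathbb{N}_0$ (via $(a,b)=(kn,-km)$, which is a correct use of $\gcd(m,n)=1$) and take $k=1$ and $k=2$, so your argument never touches the coefficients $\alpha,\beta,\alpha',\beta'$ and uses only that $mn$ is even, i.e.\ that $n-m$ is odd. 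That buys some uniformity: the same computation covers types $(2)$, $(3)$ and the type-$(4)$ case with $mn$ even, which the paper treats by separate but similar remarks; the paper's route is marginally shorter where the type-$(1)$ data is already at hand. Your handling of the second assertion (discarding the finitely many $A_0$-elements meeting $\mathrm{supp}(w)$, then applying Corollary \ref{muitos de mesmo length} to $wu_e$ and $wu_o$) is a slightly more explicit version of the paper's closing step and is complete.
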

  \begin{proof}
  	As $\alpha +\beta$ is even, there exist infinitely many monomials of even length with pairwise disjoint supports in $A_{1}$. Also this is true for $A_{-1}$. Then $A_{0}$ has infinitely many monomials of even length with pairwise disjoint supports. 
  	
  	On the other hand, we have that
  	\[0=n\times m + (-m)\times n,\]
  	therefore $A_{0}$ has infinitely many monomials of length $n+(-m)$ (odd number).
  	
  	Now given a monomial $w\in A_{r}$, due to Lemma \ref{muitos de mesmo length}, we have that $A_{r}$ has infinitely many monomials with pairwise disjoint supports and length equal to the length of $w$. Since $wA_{0}\subset A_r$, the last statement follows. 
  \end{proof}

  Given a positive integer $t$ and $n_{1}$, \dots, $n_{t}$ in $\mathbb{Z}$, we consider the positive integers $l_{n_1}$, \dots, $l_{n_t}$ and $l=l_{n_1}+ \cdots +l_{n_t}$. Let us denote by $V_{l_{n_1},\ldots, l_{n_t}}$ the vector space of $\mathbb{Z}$-graded multilinear polynomials having: 
  
  \begin{tabular}{rll}
  	$l_{n_1}$ & variables of degree  $n_1$, & namely $x_{1}^{n_1}$, \dots, $x_{l_{n_1}}^{n_1}$,\\
  	$\vdots$ &&\\ 
  	$l_{n_t}$ &  variables of degree $n_t$, & namely $x_{1}^{n_t}$, \dots, $x_{l_{n_t}}^{n_t}$.
  \end{tabular}
  
  Define $\psi\colon  V_{l}\to V_{l_{n_1},\ldots, l_{n_t}} $ as the linear isomorphism induced by the map 
  \[
  x_{i}\mapsto   
  \begin{cases} x_{i}^{n_1},\text{ if }  1\leq i\leq l_{n_1} \\  
  x_{i-l_{n_1}}^{n_2},\text{ if } l_{n_1}+1\leq i\leq l_{n_1}+l_{n_2} \\ 
  \vdots \\ 
  x_{i-(l_{n_1}+\cdots + l_{n_{t-1}})}^{n_t},\text{ if } l_{n_1}+\cdots + l_{n_{t-1}}+1\leq i\leq l_{n_1}+\cdots + l_{n_{t-1}}+l_{n_t}  
  \end{cases}.
  \]
  
  
  \begin{remark}\label{correspon idem}
  	Consider $E_{(m, n)}^{(\infty, \infty)}$ of type $(1)$. Since the field $F$ has characteristic zero, following the same ideas that were used in \cite[Proposition 5]{aagpk}, it is possible to show that
  	\[
  	\psi(V_{l}\cap T(E))=V_{l_{n_1},\ldots, l_{n_t}}\cap T_{\mathbb{Z}}(E_{(m, n)}^{(\infty, \infty)}),
  	\]
  	where $T(E)$ is the ordinary $T$-ideal of $E$. Such correspondence comes from the fact that each homogeneous component of $E_{(m, n)}^{(\infty, \infty)}$ admits infinitely many monomials of even and odd length with pairwise disjoint supports, see Lemma \ref{A0 inf even and odd1}. Hence a $\mathbb{Z}$-graded substitution is essentially an ordinary substitution. In general, if a $\mathbb{Z}$-grading on $E$ has infinitely many monomials of even and odd length with pairwise disjoint supports in each homogeneous component then the above correspondence is still valid.
  	
  \end{remark}
  
  Now we are able to prove Proposition \ref{V1}:
  
  \begin{proof}
  	It is clear that $E_{(m, n)}^{(\infty, \infty)}\in\mathfrak{V}_{1}^\mathbb{Z}$.
  	
  	Given a multilinear $\mathbb{Z}$-graded polynomial identity $f(x_{1}^{n_1},\ldots,x_{l_{n_t}}^{n_t})$ of $E_{(m, n)}^{(\infty, \infty)}$, by Remark \ref{correspon idem} we have that $f(x_{1},\ldots, x_{l})$ is an ordinary polynomial identity of $E$. Hence $f=\sum_{i=1}^{h}a_{i}[b_{i}, c_{i}, d_{i}]g_{i}$ for some polynomials $a_{i}$, $b_{i}$, $c_{i}$, $d_{i}$, $g_{i}\in F\langle X\rangle$. As $f (x_1,\ldots, x_l)$ is multilinear we can assume that each of these elements is a monomial in $F\langle X\rangle$,  and $a_{i}b_{i}c_{i}d_{i}g_{i}\in V_{l}$ for every $i=1$, \dots, $h$. Therefore we obtain that  
  	\[f(x_{1}^{n_1},\ldots,x_{l_{n_t}}^{n_t})=\psi(f (x_1,\ldots, x_l))=\sum_{i=1}^{h}\psi(a_{i}[b_{i}, c_{i}, d_{i}]g_{i})=\]
  	\[=\sum_{i=1}^{h}\overline{a_{i}}[\overline{b_{i}}, \overline{c_{i}}, \overline{d_{i}}]\overline{g_{i}}.\]  
  	Here $\overline{a_{i}}$, $\overline{b_{i}}$, $\overline{c_{i}}$, $\overline{d_{i}}$, $\overline{g_{i}}$ are monomials in $F\langle X|\mathbb{Z}\rangle$ and $\overline{a_{i}}\overline{b_{i}} \overline{c_{i}}\overline{d_{i}}\overline{g_{i}}\in V_{l_{n_1},\ldots, l_{n_t}}$, and we are done.		
  \end{proof}

\subsection{Types (2) and (3)}
Let $E_{(m, n)}^{(\infty, \infty)}$ be a structure of type either (2) or (3).

\begin{lema}\label{A0 inf even and odd}
	The component $A_{0}$ has infinitely many elements of $B_E$ with even and odd length with pairwise disjoint supports. Consequently the same property is valid for each component $A_{r}$, for all $r\in\mathbb{Z}$.
\end{lema}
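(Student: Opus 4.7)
The plan is to follow the same scheme as in the proof of Lemma \ref{A0 inf even and odd1}, now exploiting the fact that in types $(2)$ and $(3)$ the integers $m$ and $n$ necessarily have opposite parities. Indeed, by Lemma \ref{m, n odd implies type 4}, if both $m$ and $n$ were odd the grading would be of type $(4)$; since $\gcd(m,n)=1$ also rules out the case where both are even, exactly one of $m$ and $n$ is even. Consequently $n-m$ is odd.

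The parity observation gives immediate access to monomials in $A_0$ of both parities through the identity
\[
0 = (kn)\cdot m + \bigl(k(-m)\bigr)\cdot n,\qquad k\in\mathbb{N},
\]
which produces a monomial in $A_0$ of length $k(n-m)$. Taking $k=1$ yields monomials of odd length, while $k=2$ yields monomials of even length. Since both $L_{m}^{\infty}$ and $L_{n}^{\infty}$ are infinite dimensional, for each choice of $k$ one can select disjoint blocks of generators of degrees $m$ and $n$, and in this way build an infinite sequence of such monomials whose supports are pairwise disjoint.

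Finally, the extension to an arbitrary component $A_{r}$ mirrors the closing step of Lemma \ref{A0 inf even and odd1}. Fix any $w\in A_{r}$, which exists because the grading has full support by Corollary \ref{criterio full sup}, and let $(v_{j})_{j\ge 1}$ be a sequence in $A_0$ of monomials with pairwise disjoint supports that are also disjoint from $supp(w)$. Then $w v_{j}\in A_{r}$ with $|w v_{j}|=|w|+|v_{j}|$, so choosing the $v_{j}$ of the appropriate parity (both parities being available by the previous step) yields infinitely many monomials in $A_{r}$ with pairwise disjoint supports of each parity. The only genuinely new input in the whole argument is the parity of $n-m$; once this is in hand, the construction is essentially formal and requires no further obstacle to overcome.
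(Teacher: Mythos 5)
Your treatment of $A_{0}$ is correct and in fact takes a slightly different route from the paper: the paper obtains the two parities in $A_{0}$ from the products $A_{-1}A_{1}\subset A_{0}$ and $A_{-1}A_{1}A_{-1}A_{1}\subset A_{0}$, using that the coefficients $\alpha+\beta$ and $\alpha'+\beta'$ have opposite parities in types $(2)$ and $(3)$, whereas you use the single relation $0=(kn)m+(k(-m))n$ with $k=1,2$ together with the observation that $n-m$ is odd. Both are valid; yours has the mild advantage of never mentioning the coefficients and of working uniformly for types $(1)$, $(2)$ and $(3)$.

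The last step, however, contains a genuine error. The monomials $wv_{1},wv_{2},\dots$ all contain $supp(w)$, so they do \emph{not} have pairwise disjoint supports, and the lemma asks precisely for infinitely many monomials of each parity in $A_{r}$ whose supports are pairwise disjoint. What your construction actually shows is only that $A_{r}$ contains \emph{one} monomial of each parity of length. To finish you must, as the paper does, invoke Corollary \ref{muitos de mesmo length}: once $A_{r}$ is known to contain a monomial of a given length, that corollary supplies infinitely many monomials in $A_{r}$ of that same length with pairwise disjoint supports. (Alternatively, replace the single $w$ by infinitely many pairwise disjoint $w_{i}\in A_{r}$ of equal length, again via Corollary \ref{muitos de mesmo length}, and multiply each $w_{i}$ by a $v_{i}\in A_{0}$ of the desired parity chosen disjoint from all the $w_{j}$ and from the other $v_{j}$.) The repair is easy, but as written the claim of pairwise disjointness in the final sentence is false.
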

\begin{proof}
	Suppose that $E_{(m, n)}^{(\infty, \infty)}$ is of type $(2)$. Then $A_{1}$ has infinitely many elements of $B_E$ with pairwise disjoint supports and even length (length $\alpha + \beta$) and $A_{-1}$ has elements of $B_E$ of odd length (length $\alpha' + \beta'$) in the same conditions. Therefore $A_0$ has infinitely many elements of $B_E$ with pairwise disjoint supports and both length (because $A_{-1}A_{1}\subset A_{0}$ and $A_{-1}A_{1}A_{-1}A_{1}\subset A_{0}$).
	
	Now given a monomial $w\in A_{r}$, due to Corollary \ref{muitos de mesmo length}, we have that $A_{r}$ has infinitely many monomials with pairwise disjoint supports and length equal to the length of $w$. Since $wA_{0}\subset A_r$, the last statement follows. When $E_{(m, n)}^{(\infty, \infty)}$ is of type $(3)$ we use the same argument. 
\end{proof}

\begin{proposition}\label{V11}
	Let $E_{(m, n)}^{(\infty, \infty)}$ be a structure of type $(2)$ or $(3)$. Then $E_{(m, n)}^{(\infty, \infty)}$ generates the variety $\mathfrak{V}_{1}^\mathbb{Z}$. 
	 
\end{proposition}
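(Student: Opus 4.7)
The plan is to mirror the proof of Proposition \ref{V1}, since Lemma \ref{A0 inf even and odd} supplies precisely the combinatorial hypothesis needed to invoke the correspondence of Remark \ref{correspon idem} in the types (2) and (3) cases. First, I would observe that $E_{(m,n)}^{(\infty,\infty)}\in\mathfrak{V}_{1}^{\mathbb{Z}}$: the underlying algebra $E$ satisfies the ordinary identity $[x_{1},x_{2},x_{3}]=0$ (by Latyshev, Krakowski--Regev), and this identity remains a $\mathbb{Z}$-graded identity regardless of the choice of degrees of the variables. Hence $T_{\mathbb{Z}}(\mathfrak{V}_{1}^{\mathbb{Z}})\subseteq T_{\mathbb{Z}}(E_{(m,n)}^{(\infty,\infty)})$.

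For the reverse inclusion, since $F$ has characteristic zero it suffices to handle multilinear $\mathbb{Z}$-graded polynomial identities. Let $f=f(x_{1}^{n_1},\ldots,x_{l_{n_t}}^{n_t})$ be such an identity and set $l=l_{n_1}+\cdots+l_{n_t}$. By Lemma \ref{A0 inf even and odd}, every homogeneous component $A_{r}$ of $E_{(m,n)}^{(\infty,\infty)}$ contains infinitely many basic monomials of even length and infinitely many of odd length, all with pairwise disjoint supports. This is exactly the hypothesis under which Remark \ref{correspon idem} guarantees the equality
\[
\psi(V_{l}\cap T(E))=V_{l_{n_1},\ldots,l_{n_t}}\cap T_{\mathbb{Z}}(E_{(m,n)}^{(\infty,\infty)}).
\]
Consequently $f=\psi(\tilde f)$ for some multilinear $\tilde f(x_{1},\ldots,x_{l})\in V_{l}\cap T(E)$.

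Since $T(E)$ is generated as a $T$-ideal by the triple commutator, and $\tilde f$ is multilinear, I can write
\[
\tilde f(x_{1},\ldots,x_{l})=\sum_{i=1}^{h}a_{i}\,[b_{i},c_{i},d_{i}]\,g_{i},
\]
where each $a_{i},b_{i},c_{i},d_{i},g_{i}$ is a monomial in $F\langle X\rangle$ and each summand lies in $V_{l}$. Applying $\psi$ and using that it is a linear map converting the ordinary variables $x_{i}$ into the graded variables $x_{j}^{n_{s}}$ in the prescribed way, I get
\[
f=\sum_{i=1}^{h}\overline{a_{i}}\,[\overline{b_{i}},\overline{c_{i}},\overline{d_{i}}]\,\overline{g_{i}},
\]
which exhibits $f$ as a $T_{\mathbb{Z}}$-consequence of $[x_{1},x_{2},x_{3}]$. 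Therefore $T_{\mathbb{Z}}(E_{(m,n)}^{(\infty,\infty)})\subseteq T_{\mathbb{Z}}(\mathfrak{V}_{1}^{\mathbb{Z}})$, and equality follows.

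The potential obstacle is verifying that Remark \ref{correspon idem} genuinely applies in types (2) and (3) and not only in type (1). The remark, however, is stated in the general form: its hypothesis is exactly the existence of infinitely many disjoint-support monomials of both parities in each homogeneous component, and this is precisely what Lemma \ref{A0 inf even and odd} provides here. So there is no new content beyond assembling these pieces, and the argument goes through verbatim as in Proposition \ref{V1}.
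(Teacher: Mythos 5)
Your proposal is correct and follows exactly the route the paper takes: its proof of this proposition is a one-line appeal to Remark \ref{correspon idem} and Lemma \ref{A0 inf even and odd}, with the detailed reduction to the triple commutator being verbatim the argument of Proposition \ref{V1}, which you reproduce faithfully. You correctly note that the Remark is stated in the general form whose hypothesis is supplied by Lemma \ref{A0 inf even and odd}, so no gap remains.
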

\begin{proof}
	It follows from Remark \ref{correspon idem} and Lemma \ref{A0 inf even and odd}. 
\end{proof}

 \subsection{Type (4)}

This section  is devoted to study of the structures $E_{(m, n)}^{(\infty, \infty)}$ of type (4). 

Suppose that $E_{(m, n)}^{(\infty, \infty)}$ comes from the decomposition 
$L=L_{m}^{\infty}\oplus L_{n}^{\infty}$. 
We may assume that 
\[L_{m}^{ \infty}=span_{F}\{e_{1}, e_{3}, e_{5},\ldots\},\] 
\[L_{n}^{ \infty}=span_{F}\{e_{2}, e_{4}, e_{6},\ldots\}.\]

Consider the variety $\mathfrak{V}_{2}^\mathbb{Z}$ of $\mathbb{Z}$-graded algebras define by

\begin{center}
	\begin{enumerate} 
		\item	$[x_{1}, x_{2}]$, if either $\alpha(x_{1})$ or $\alpha(x_{2})$ is even and
		\item $x_{1}x_{2}+x_{2}x_{1}$, if both $\alpha(x_{1})$ and $\alpha(x_{2})$ are odd.
	\end{enumerate} 
\end{center} 

\begin{example}
	Note that $E^{can}\in \mathfrak{V}_{2}^\mathbb{Z}$ but it does not generate $\mathfrak{V}_{2}^\mathbb{Z}$.
\end{example}

In the following results an essential condition for our computation is assumed: the indexes $m$ and $n$ will be considered odd integers. 

\begin{lema}\label{A in centro}
	Suppose that $m$ and $n$ are odd integers, let $E_{(m, n)}^{(\infty, \infty)}=\bigoplus_{r\in\mathbb{Z}}A_{r}$ and $E_{can}=E_{(0)}\oplus E_{(1)}$. If $r$ is an even integer then $A_{r}\subset Z(E)$. If $r$ is an odd integer then $A_{r}\subset E_{(1)}$.
\end{lema}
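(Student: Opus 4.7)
The plan is to exploit a simple parity argument. Since the grading is homogeneous, every homogeneous component $A_r$ is spanned by basis monomials $w = e_{i_1}\cdots e_{i_s}\in B_E$. For such a $w$, let $a$ denote the number of factors drawn from $L_m^\infty=\mathrm{span}\{e_1,e_3,\ldots\}$ and $b$ the number of factors drawn from $L_n^\infty=\mathrm{span}\{e_2,e_4,\ldots\}$, so that the length satisfies $|w|=a+b$ and the $\mathbb{Z}$-degree is $\|w\|=am+bn=r$.

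The key observation is that because $m$ and $n$ are both odd, reduction modulo $2$ gives
\[
r \;=\; am+bn \;\equiv\; a+b \;=\; |w| \pmod 2.
\]
So the parity of the $\mathbb{Z}$-degree of a basis monomial in the grading $E_{(m,n)}^{(\infty,\infty)}$ coincides with the parity of its length, i.e.\ with its $\mathbb{Z}_2$-degree in the canonical grading $E_{can}=E_{(0)}\oplus E_{(1)}$.

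From this, if $r$ is even, every basis monomial in $A_r$ has even length and hence lies in $E_{(0)}=Z(E)$; passing to linear combinations gives $A_r\subseteq Z(E)$. If $r$ is odd, every basis monomial in $A_r$ has odd length and hence lies in $E_{(1)}$; again linearity gives $A_r\subseteq E_{(1)}$. There is no real obstacle: the only thing to notice is that each homogeneous component of $E_{(m,n)}^{(\infty,\infty)}$ is spanned by elements of $B_E$ (which follows from the grading being homogeneous in the sense of the paper), after which the congruence $am+bn\equiv a+b\pmod 2$ does all the work.
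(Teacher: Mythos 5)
Your proof is correct and follows essentially the same parity argument as the paper: both reduce the relation $r = am + bn$ modulo $2$ using the oddness of $m$ and $n$ to match the parity of the degree with the parity of the length of each basis monomial. The only cosmetic difference is that you state the congruence $r\equiv a+b\pmod 2$ directly, while the paper phrases it as the two exponents having the same or contrary parities.
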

\begin{proof}
	Let $r\geq 0$ be an integer. Let
	\[w=e_{i_1}\cdots e_{i_l}e_{j_1}\cdots e_{j_k}\]
	be a monomial of $E$. Clearly we can assume that $i_{1},\ldots, i_{l}$ are odd numbers and $j_{1},\ldots, j_{k}$ are even numbers. Note that
	\[w\in A_{r}\quad \mbox{\textrm{ if and only if }}\quad r=lm+kn.\]
	If $r$ is even, since $m$ and $n$ are odd, we conclude that $l$ and $k$ have the same parity. Thus $l+k$ is even and $A_{r}\subset Z(E)$. Similarly we show that $A_{-r}\subset Z(E)$.
	
	If $r$ is odd, we conclude that $l$ and $k$ have contrary parities, i.e, $A_{r}\subset E_{(1)}$. The same idea implies that $A_{-r}\subset E_{(1)}$ as well.
\end{proof}

\begin{theorem}
	Let $m$ and $n$ be odd integers. Then $E_{(m, n)}^{(\infty, \infty)}$ generates $\mathfrak{V}_{2}^\mathbb{Z}$.
\end{theorem}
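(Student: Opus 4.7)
The plan is to verify both that $E_{(m, n)}^{(\infty, \infty)} \in \mathfrak{V}_2^{\mathbb{Z}}$ and that every multilinear $\mathbb{Z}$-graded identity of $E_{(m, n)}^{(\infty, \infty)}$ is a consequence of the two defining identities of $\mathfrak{V}_2^{\mathbb{Z}}$. The membership part is immediate from Lemma \ref{A in centro}: if $\alpha(x_1)$ is even, substitutions land in $A_r \subseteq Z(E)$, so $[x_1, x_2]$ vanishes; if both $\alpha(x_1)$ and $\alpha(x_2)$ are odd, substitutions land in $E_{(1)}$, whose elements anticommute, so $x_1 x_2 + x_2 x_1$ vanishes. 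Also note that here $\gcd(m,n)=1$ is automatic (odd coprimality is compatible with the full-support hypothesis) and by Lemma \ref{m, n odd implies type 4} the grading really is of type (4).

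For the harder direction, since $\mathrm{char}\, F = 0$, I only need to handle multilinear polynomials. Using the two defining identities as rewriting rules, any multilinear monomial $x_{i_1}^{l_{i_1}} \cdots x_{i_r}^{l_{i_r}}$ can be reduced modulo $T_{\mathbb{Z}}(\mathfrak{V}_2^{\mathbb{Z}})$ to a \emph{canonical monomial}: all variables of even degree placed first in the order of their subscripts (they are central), followed by all variables of odd degree in the order of their subscripts (swaps among them produce only a sign $\pm 1$). Consequently, for each fixed assignment of degrees $(l_1, \ldots, l_r)$ to the variables $x_1, \ldots, x_r$, the multilinear multigraded component of $F\langle X | \mathbb{Z}\rangle / T_{\mathbb{Z}}(\mathfrak{V}_2^{\mathbb{Z}})$ is at most one-dimensional, spanned by the canonical monomial.

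To finish, I will show this canonical monomial is not a $\mathbb{Z}$-graded identity of $E_{(m, n)}^{(\infty, \infty)}$, for every choice of $(l_1, \ldots, l_r) \in \mathbb{Z}^r$. By Corollary \ref{criterio full sup}, the grading has full support so every $A_{l_i}$ is nonzero. More importantly, Lemma \ref{A0 inf even and odd1} together with Corollary \ref{muitos de mesmo length} (or their analogues valid here, using $u = v = \infty$) guarantee that each $A_{l_i}$ contains infinitely many basis monomials of $E$ with pairwise disjoint supports. I can therefore pick basis monomials $a_i \in A_{l_i}$, for $i = 1, \ldots, r$, whose supports are pairwise disjoint across all $i$. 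Evaluating the canonical monomial on $(a_1, \ldots, a_r)$ yields $\pm e_{j_1} \cdots e_{j_N}$ with the $e_{j_k}$ distinct, hence nonzero in $E$. This shows the canonical monomial is not an identity, so the quotient component has dimension exactly one, matching $\mathfrak{V}_2^{\mathbb{Z}}$.

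The one place that requires a little care is the existence of the disjoint-support evaluation for arbitrary multidegrees: when several $l_i$ coincide, I must avoid reusing generators $e_j$. The infinite-dimensionality of each $A_{l_i}$, coming from $u = v = \infty$ together with full support, makes this routine, so there is no genuine obstacle — the argument is essentially a matching of the reduction procedure against a single explicit nonzero substitution.
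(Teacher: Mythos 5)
Your proof is correct and follows the route the paper intends: membership of $E_{(m,n)}^{(\infty,\infty)}$ in $\mathfrak{V}_{2}^{\mathbb{Z}}$ via Lemma \ref{A in centro}, and generation via reduction of multilinear monomials to a canonical form modulo the defining identities followed by a nonzero evaluation on basis monomials with pairwise disjoint supports. The paper compresses all of this into ``It is a trivial consequence of Lemma \ref{A in centro}''; you have simply supplied the details it leaves implicit, including the correct observation that the standing full-support hypothesis guarantees the disjoint-support substitutions exist.
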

\begin{proof}
	It is a trivial consequence of Lemma \ref{A in centro}.
\end{proof}

Now suppose that $m\times n$ is even. Hence we have that $-m+n$ is an odd integer. Since 
\[0=n\times m + (-m)\times n,\]
the component $A_{0}$ has infinitely many monomials of odd length, namely of length $n+(-m)$.

As $1=\alpha m +\beta n $ and $-1=\alpha' m +\beta' n $, the components $A_1$ and $A_{-1}$ have monomials in the same conditions of odd length. 

Then $A_{0}$ has infinitely many monomials of both length with pairwise disjoint supports. Therefore this is also valid for each homogeneous component of the grading. In the light of this fact, we have that:

\begin{theorem}
	If $m\times n$ is even and $E_{(m, n)}^{(\infty, \infty)}$ is of type $(4)$, then $E_{(m, n)}^{(\infty, \infty)}$ generates the variety $\mathfrak{V}_{1}^\mathbb{Z}$.
\end{theorem}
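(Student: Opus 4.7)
The plan is to mimic the proofs of Propositions \ref{V1} and \ref{V11}. The inclusion $E_{(m,n)}^{(\infty,\infty)}\in\mathfrak{V}_{1}^\mathbb{Z}$ is immediate: the triple commutator $[x_1,x_2,x_3]$ is an ordinary identity of the Grassmann algebra $E$, hence every graded specialization is a $\mathbb{Z}$-graded identity of $E_{(m,n)}^{(\infty,\infty)}$. So I only need to establish the reverse inclusion, i.e.\ that every multilinear $\mathbb{Z}$-graded identity of $E_{(m,n)}^{(\infty,\infty)}$ lies in the $T_\mathbb{Z}$-ideal generated by $[x_1,x_2,x_3]$.

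The strategy is to invoke Remark \ref{correspon idem}: if each homogeneous component $A_r$ of $E_{(m,n)}^{(\infty,\infty)}=\bigoplus_{r\in\mathbb{Z}}A_r$ contains infinitely many basis monomials of even length with pairwise disjoint supports, and also infinitely many of odd length with pairwise disjoint supports, then a multilinear $\mathbb{Z}$-graded polynomial $f$ vanishing on $E_{(m,n)}^{(\infty,\infty)}$ corresponds, via $\psi^{-1}$, to an ordinary multilinear identity of $E$, and the argument from the proof of Proposition \ref{V1} then writes $f$ as a $T_\mathbb{Z}$-consequence of $[x_1,x_2,x_3]$.

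The only remaining task is therefore to verify the ``both parities'' condition, and this is exactly what the paragraph preceding the theorem sets up. First, since $mn$ is even and $\gcd(m,n)=1$, precisely one of $m, n$ is even, so $-m+n$ is odd; the decomposition $0=n\cdot m+(-m)\cdot n$ then produces monomials in $A_0$ of odd length $n+(-m)$, and by taking disjoint choices of generators in $L_m^\infty$ and $L_n^\infty$ one obtains infinitely many such monomials with pairwise disjoint supports. Second, since the grading is of type $(4)$, the lengths $\alpha+\beta$ and $\alpha'+\beta'$ realizing the degrees $1$ and $-1$ are both odd, so $A_1$ and $A_{-1}$ each carry infinitely many pairwise-disjoint monomials of odd length. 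Multiplying such a monomial of $A_1$ by a disjoint one of $A_{-1}$ lands in $A_0$ and has even length $(\alpha+\beta)+(\alpha'+\beta')$; varying the choices gives infinitely many pairwise-disjoint even-length monomials in $A_0$. Thus $A_0$ meets both parity requirements. For an arbitrary component $A_r$, choose any monomial $w\in A_r$ and multiply it on the right by the two families just constructed in $A_0$; since $wA_0\subseteq A_r$, Corollary \ref{muitos de mesmo length} (together with disjointness of supports with respect to $w$) yields infinitely many pairwise-disjoint monomials in $A_r$ of both parities.

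I do not anticipate a substantive obstacle: all the combinatorial ingredients are already in place, and the whole argument is a verification plus a reference to Remark \ref{correspon idem}. The only delicate point is organizational, namely making sure that when transferring from $A_0$ to a general $A_r$ via $w$ one preserves pairwise disjointness of supports. This is handled by choosing the families in $A_0$ from generators disjoint from $\mathrm{supp}(w)$, which is possible because $L_m^\infty$ and $L_n^\infty$ are both infinite-dimensional.
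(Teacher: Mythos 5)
Your proposal is correct and follows essentially the same route as the paper: the paper's proof is precisely the observation that, since $mn$ is even and the type is $(4)$, the component $A_0$ (and hence every $A_r$) contains infinitely many pairwise-disjoint basis monomials of both even and odd length, after which Remark \ref{correspon idem} reduces everything to the ordinary identities of $E$. You have merely written out in full the "previous comments" that the paper's two-line proof points to, including the disjointness bookkeeping when passing from $A_0$ to a general $A_r$.
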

\begin{proof}
	Once again, it is enough to apply Remark \ref{correspon idem} and the previous comments. 
\end{proof}

In the next box, the results of this section are summarized:

\[
\begin{tabular}{|c|c|c|c|c|}
\hline
$\gcd(m,n)=1$ & $m\times n$ even & $E_{(m, n)}^{(\infty, \infty)}$ generates $\mathfrak{V}_{1}^\mathbb{Z}$\\\hline
$\gcd(m,n)=1$ & $m\times n$ odd & $E_{(m, n)}^{(\infty, \infty)}$ generates $\mathfrak{V}_{2}^\mathbb{Z}$ \\\hline
\end{tabular}
\]


 \section{$\mathbb{Z}$-isomorphism and PI-equivalence}
     In this section, all the $2$-induced $\mathbb{Z}$-gradings on $E$ of full support shall be classified. First observe that $E_{(-1, 1)}^{(\infty, \infty)}$ generates the variety $\mathfrak{V}_{2}^{\mathbb{Z}}$ and $E_{(-1, 2)}^{(\infty, \infty)}$ generates the variety $\mathfrak{V}_{1}^{\mathbb{Z}}$.

     In the light of this and also of the Section \ref{start} it can be enunciated that:
     
      \begin{theorem}\label{pi equiv here}
     Consider the Grassmann algebra $E$ over a field $F$ of characteristic zero. Let $m<0<n$ be integer numbers and suppose that $\gcd(m, n)=1$. 
     \begin{enumerate}
     \item The $\mathbb{Z}$-grading $E_{(m, n)}^{(\infty, \infty)}$ is PI-equivalent to $E_{(-1, 1)}^{(\infty, \infty)}$ or
     \item The $\mathbb{Z}$-grading $E_{(m, n)}^{(\infty, \infty)}$ is PI-equivalent to $E_{(-1, 2)}^{(\infty, \infty)}$
     \end{enumerate}
     \end{theorem}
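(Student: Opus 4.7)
The statement is essentially a direct corollary of the results that have been established in Section \ref{start}, and my plan is to package those results cleanly.

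My plan is to start by noting that the hypothesis $\gcd(m,n)=1$ together with $m<0<n$ ensures, via Corollary \ref{criterio full sup}, that $E_{(m,n)}^{(\infty,\infty)}$ is of full support. Since $\gcd(m,n)=1$, at least one of $m$, $n$ is odd, and the dichotomy ``$m\cdot n$ even vs.\ $m\cdot n$ odd'' partitions all admissible pairs $(m,n)$. I will treat these two cases separately, and in each case identify which of the two canonical model gradings $E_{(-1,1)}^{(\infty,\infty)}$ or $E_{(-1,2)}^{(\infty,\infty)}$ falls into the same case. Both canonical models satisfy $\gcd=1$ and $m<0<n$, so they are covered by the same theory: $(-1)\cdot 1$ is odd, while $(-1)\cdot 2$ is even.

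For Case 1, assume $m\cdot n$ is even. By the final theorem of the type $(4)$ subsection (or by the results on types $(1)$, $(2)$, $(3)$, depending on the parities of $\alpha+\beta$ and $\alpha'+\beta'$), $E_{(m,n)}^{(\infty,\infty)}$ generates the variety $\mathfrak{V}_{1}^{\mathbb{Z}}$. Applying the same theorem to $(m,n)=(-1,2)$, whose product is $-2$ even with $\gcd=1$, we conclude that $E_{(-1,2)}^{(\infty,\infty)}$ also generates $\mathfrak{V}_{1}^{\mathbb{Z}}$. Therefore
\[
T_{\mathbb{Z}}\!\left(E_{(m,n)}^{(\infty,\infty)}\right)=T_{\mathbb{Z}}(\mathfrak{V}_{1}^{\mathbb{Z}})=T_{\mathbb{Z}}\!\left(E_{(-1,2)}^{(\infty,\infty)}\right),
\]
which is the definition of PI-equivalence as $\mathbb{Z}$-graded algebras.

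For Case 2, assume $m\cdot n$ is odd, so both $m$ and $n$ are odd. By the theorem in the type $(4)$ subsection (the one proved as a trivial consequence of Lemma \ref{A in centro}), $E_{(m,n)}^{(\infty,\infty)}$ generates $\mathfrak{V}_{2}^{\mathbb{Z}}$. The same theorem applied to $(-1,1)$ (both odd, $\gcd=1$) shows $E_{(-1,1)}^{(\infty,\infty)}$ generates $\mathfrak{V}_{2}^{\mathbb{Z}}$, and again PI-equivalence follows by comparing $T_{\mathbb{Z}}$-ideals. The cases are exhaustive, proving the theorem.

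There is no real obstacle here; the entire content has been developed in the preceding subsections, and the proof amounts to checking which of the two varieties the pair $(m,n)$ lands in and observing that the representative pairs $(-1,1)$ and $(-1,2)$ are themselves instances of the classification. The only thing worth emphasising in the write-up is the covering of all sub-types: one might include a brief reminder that when $m\cdot n$ is even and the structure is of type $(4)$, the stronger final theorem of that subsection kicks in and lands us again in $\mathfrak{V}_{1}^{\mathbb{Z}}$, so the case split by parity of $m\cdot n$ (rather than by the types $(1)$--$(4)$) is indeed exhaustive.
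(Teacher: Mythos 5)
Your proposal is correct and follows essentially the same route as the paper, which likewise derives the theorem directly from the Section \ref{start} classification (the dichotomy on the parity of $m\times n$) together with the observation that $E_{(-1,1)}^{(\infty,\infty)}$ generates $\mathfrak{V}_{2}^{\mathbb{Z}}$ and $E_{(-1,2)}^{(\infty,\infty)}$ generates $\mathfrak{V}_{1}^{\mathbb{Z}}$. Your extra care in noting that the type $(4)$ case with $m\times n$ even still lands in $\mathfrak{V}_{1}^{\mathbb{Z}}$ is exactly the point the paper's summary table encodes.
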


     
     A natural question would be: are such structures $\mathbb{Z}$-isomorphic? 
     Now we are going to investigate such question, given a negative answer. 
     \begin{theorem}\label{not isom geral}
     	Consider $m_{1}<0<n_{1}$ and $m_{2}<0<n_{2}$. The $\mathbb{Z}$-gradings $E_{(m_{1}, n_{1})}^{(\infty, \infty)}$ and $E_{(m_{2}, n_{2})}^{(\infty, \infty)}$ are  $\mathbb{Z}$-isomorphic if and only if $m_{1}=m_{2}$ and $n_{1}=n_{2}$.
     \end{theorem}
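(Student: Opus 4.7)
The backward direction is immediate, so suppose $\varphi \colon E_{(m_1, n_1)}^{(\infty,\infty)} \to E_{(m_2, n_2)}^{(\infty,\infty)}$ is a $\mathbb{Z}$-graded algebra isomorphism, and write $A_r^{(i)}$ for the degree-$r$ component of the $i$-th grading. My plan is to exhibit an algebraic invariant of the graded structure that recovers the unordered pair $\{m_i, n_i\}$; combined with the sign hypothesis $m_i < 0 < n_i$, this will force $m_1 = m_2$ and $n_1 = n_2$.

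The key tool is the augmentation ideal $I \subseteq E$, namely the linear span of all basis monomials $e_{i_1}\cdots e_{i_k}$ with $k \geq 1$. I would first verify that $I$ is the unique maximal two-sided ideal of $E$: every element of $I$ lies in the finite-dimensional Grassmann subalgebra generated by finitely many of the $e_i$'s and is therefore nilpotent, while $E/I \cong F$, so $I$ is itself maximal; any other maximal ideal $M$ would satisfy $M + I = E$, giving $1 = m + i$ with $i \in I$ nilpotent, so $m = 1 - i$ is a unit and $M = E$, a contradiction. Hence $I$ (and consequently $I^2$) is intrinsic to the algebra structure and preserved by $\varphi$.

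Next, I would identify $I^2 = \bigoplus_{k \geq 2} E_k$, where $E_k$ denotes the span of length-$k$ basis monomials of $E$: the inclusion $\supseteq$ comes from the factorization $e_{i_1}\cdots e_{i_k} = e_{i_1}\cdot(e_{i_2}\cdots e_{i_k}) \in I\cdot I$ for $k \geq 2$, while $\subseteq$ is immediate. Consequently, for every $r \neq 0$ the quotient $A_r / (A_r \cap I^2)$ identifies canonically with $L \cap A_r$, which is nonzero precisely when $r \in \{m, n\}$. The set
\[
\mathcal{S}\bigl(E_{(m,n)}^{(\infty,\infty)}\bigr) := \{\, r \in \mathbb{Z}\setminus\{0\} : A_r \not\subseteq I^2 \,\} = \{m, n\}
\]
is therefore a $\mathbb{Z}$-isomorphism invariant; applying this to $\varphi$ yields $\{m_1, n_1\} = \{m_2, n_2\}$, and the sign constraint $m_i < 0 < n_i$ closes the argument.

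The pivotal (and only) nontrivial step is the intrinsic characterization of $I$ in the second paragraph; once that is secured, the rest is a bookkeeping check on the minimal length of monomials appearing in each homogeneous component.
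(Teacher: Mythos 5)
Your proposal is correct and follows essentially the same route as the paper: both arguments rest on the fact that the augmentation ideal $I$ and its square are preserved by any algebra isomorphism of $E$, so that the degrees $r$ with $A_r \not\subseteq I^2$ (equivalently, with $L \cap A_r \neq 0$) form an invariant equal to $\{m,n\}$, and the sign condition then pins down the order. The only cosmetic difference is how preservation of $I$ is justified --- you characterize $I$ as the unique maximal ideal, while the paper observes that $\varphi(e_k)^2 = 0$ forces $\varphi(e_k)$ to have no scalar part, so that $\varphi$ respects the length filtration.
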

 \begin{proof}
 	It is enough to prove one part. Let $E_{(m_{1}, n_{1})}^{(\infty, \infty)}=\bigoplus_{r\in\mathbb{Z}}A_{r}$ and $E_{(m_{2}, n_{2})}^{(\infty, \infty)}=\bigoplus_{r\in\mathbb{Z}}B_{r}$. 
 	
 	Suppose that $\phi: E_{(m_{1}, n_{1})}^{(\infty, \infty)}\longrightarrow E_{(m_{2}, n_{2})}^{(\infty, \infty)}$ is a $\mathbb{Z}$-graded isomorphism. Hence we have that $\phi(A_{m_2})=B_{m_2}$. 
 	
 	As $\phi$ is a homomorphism we note that $\phi(e_{k})$ has not scalar part (because $\phi(e_{k})^{2}=0$ ), for all $k\in\mathbb{N}$. In this case, if $w\in B_E$ then
 	\[\phi(w)=\sum_{u\in B_{E}, |u|\geq |w| }\lambda_{u}u.\]
 	
 	Since $B_{m_2}$ has monomials of length one, then the same holds for $A_{m_2}$. Due to $L\subset A_{m_1}\cup A_{n_1}$, it follows that $m_{2}= m_{1}$ or $m_{2}= n_{1}$. As $m_{2}$ is a negative number we have that $m_{2}= m_{1}$. Similarly we show that $n_{2}= n_{1}$. 
 	\end{proof}
    
    The last result does not depend on the ground field $F$. Moreover we have a natural generalization of it in a more general context of $G$-gradings, as we can see below. 
    
    Given any group $G$ and elements $g_{1}, \ldots, g_{r}, h_{1}, \ldots, h_{s}$ in $G$, such that $g_{i}g_{j}=g_{j}g_{i}$ and $h_{i'}h_{j'}=h_{j'}h_{i'}$, for all $i, j\in\{1,\ldots, r\}$, and $i', j'\in\{1,\ldots, s\}$, consider the $G$-gradings $E_{(g_{1},\ldots,g_r)}^{(\infty,\ldots, \infty)}$ and $E_{(h_{1},\ldots,h_s)}^{(\infty,\ldots, \infty)}$ as presented in Remark \ref{induzindo em qualquer G}.

     \begin{proposition}
    	The $G$-gradings $E_{(g_{1},\ldots,g_r)}^{(\infty,\ldots, \infty)}$ and $E_{(h_{1},\ldots,h_s)}^{(\infty,\ldots, \infty)}$ are $G$-isomorphic if and only if $r=s$ and  $\{g_{1},\ldots,g_r\}=\{h_{1},\ldots,h_s\}$. 
    \end{proposition}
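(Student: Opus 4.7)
The plan is to mirror the argument of Theorem \ref{not isom geral}, whose only ingredient specific to the integers was the sign of $m$ and $n$; the essential step is a length-filtration argument that works for any abelian group $G$.

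For the \emph{if} direction, assume $r=s$ and $\{g_1,\ldots,g_r\}=\{h_1,\ldots,h_s\}$. I would choose a permutation $\sigma\in S_r$ with $g_i=h_{\sigma(i)}$ and, for each $i$, a linear isomorphism $\phi_i\colon L_{g_i}^{\infty}\to L_{h_{\sigma(i)}}^{\infty}$ (both spaces are countably infinite dimensional). Assembling these produces a linear automorphism of $L$ which extends, by the universal property of the Grassmann algebra, to an algebra automorphism $\phi\colon E\to E$; by construction $\phi$ maps the $g$-component of the first grading to the $g$-component of the second, for every $g\in G$.

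For the \emph{only if} direction, suppose $\phi$ is a $G$-graded isomorphism between the two structures, and let $J$ denote the augmentation ideal of $E$, that is, the span of all basis monomials of length at least one. Writing $\phi(e_k)=\lambda\cdot 1+j$ with $\lambda\in F$ and $j\in J$, the identity $\phi(e_k)^2=\phi(e_k^2)=0$ combined with $j^2\in J^2\subseteq J$ forces $\lambda^2=0$, hence $\lambda=0$; therefore $\phi(e_k)\in J$ for every $k$. Multiplicativity then yields $\phi(J)\subseteq J$ and $\phi(J^2)\subseteq J^2$, and applying the same argument to $\phi^{-1}$ upgrades both inclusions to equalities. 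Consequently $\phi$ descends to a $G$-graded vector-space isomorphism $\bar{\phi}\colon J/J^2\to J/J^2$.

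The conclusion is then read off the supports. Since $J=L\oplus J^2$ as vector spaces, the classes $\{\bar{e}_k\}_{k\geq 1}$ form a basis of $J/J^2$, and the canonical map $L\to J/J^2$ is an isomorphism of $G$-graded vector spaces with respect to \emph{both} gradings. Under the first grading the support of $L=\bigoplus_{i=1}^r L_{g_i}^{\infty}$ is $\{g_1,\ldots,g_r\}$, while under the second it is $\{h_1,\ldots,h_s\}$. Since $\bar{\phi}$ is a $G$-graded isomorphism these two supports must coincide, giving $\{g_1,\ldots,g_r\}=\{h_1,\ldots,h_s\}$; distinctness of the indices within each family then forces $r=s$. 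The only delicate point is verifying that $\phi$ preserves $J$ (and hence $J^2$), which is precisely the scalar-part argument already exploited in the proof of Theorem \ref{not isom geral}; everything else is bookkeeping on graded supports.
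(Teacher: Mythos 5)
Your proof is correct and follows essentially the same route as the paper's: the decisive step in both is that $\phi(e_k)^2=0$ forces $\phi(e_k)$ to have no scalar part, so $\phi$ respects the length filtration and the degrees of the degree-one generators are pinned down. Your repackaging of this via the induced graded isomorphism on $J/J^2\cong L$ is merely a tidier organization of the paper's component-by-component comparison of length-one monomials, and you additionally spell out the easy converse direction that the paper leaves implicit.
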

    \begin{proof}
    	Let $E_{(g_{1},\ldots,g_r)}^{(\infty,\ldots, \infty)}=A=\bigoplus_{g\in G}A_{g}$ and $E_{(h_{1},\ldots,h_s)}^{(\infty,\ldots, \infty)}=B=\bigoplus_{g\in G}B_{g}$. Suppose that $\phi: A\longrightarrow B$ is a $G$-graded isomorphism. Hence we have that $\phi(A_{h_1})=B_{h_1}$. 
    	
    	As in Theorem \ref{not isom geral}, $B_{h_1}$ has monomials of length one, then $A_{h_1}$ has also monomials of length one. Due to $L\subset A_{g_1}\cup \cdots \cup A_{g_r}$, it follows that there exists some $i$ such that $h_{1}= g_{i}$. Hence we can suppose without loss of generality that $i=1$. Similarly we show that $h_j$ belongs to $\{g_1,\ldots, g_r\}$, for each $j=2,\ldots, s$. We conclude that $s\leq r$ and $\{g_{1},\ldots,g_r\}\supseteq \{h_{1},\ldots,h_s\}$. The reverse inequality and the reverse inclusion are obtained  substituting $\phi$ by $\phi^{-1}$. 
    \end{proof}
    

    Section \ref{start} implies that the identities for  $E_{(m, n)}^{(\infty, \infty)}$ are completely determine by the parity of $m\times n$, while Theorem \ref{not isom geral} implies that both lower indexes $m$ and $n$ completely determine the structure of $E_{(m, n)}^{(\infty, \infty)}$ as $\mathbb{Z}$-graded algebra. 
    
    The structures given in Theorem \ref{pi equiv here} and Theorem \ref{not isom geral} are the first example of graded algebras whose support is infinite that are PI-equivalent and not isomorphic as graded algebras.

      \section{Central $\mathbb{Z}$-gradings on $E$}
      
      In this section we present results related to more general $\mathbb{Z}$-gradings on the Grassmann algebra, the so-called central $\mathbb{Z}$-gradings. Such structures have the strong property that each homogeneous component of even degree contains a ``significant part'' of the centre of $E$, which allows us to obtain its graded polynomial identities. In all $2$-induced $\mathbb{Z}$-grading of full support on $E$ studied  in Section \ref{start} it has been proved that each homogeneous component of even degree admits infinitely many monomials of even length with pairwise disjoint supports. This is the motivation to define a central $\mathbb{Z}$-grading on $E$.

       
       In the definition below we assume that the   $\mathbb{Z}$-grading on $E$ is homogeneous and of full support.
       
      \begin{definition}
      	Let $H=2\mathbb{Z}$. A $\mathbb{Z}$-grading $E=\bigoplus_{r\in\mathbb{Z}}A_{r}$ on $E$ is called central $\mathbb{Z}$-grading if each component $A_h$ has infinitely many elements of $B_E$ of even length with pairwise disjoint supports, for all $h\in H$. 
      	In this case, we write $E=E^{c}$ to indicate that $E$ is endowed with a central $\mathbb{Z}$-grading.
      \end{definition}
      \begin{example}
      	All $2$-induced $\mathbb{Z}$-gradings of full support are central $\mathbb{Z}$-gradings on $E$. The structure $E^{can}$ is not a central $\mathbb{Z}$-grading. Each component of even degree in $E_{(-2, 2)}^{(\infty, \infty)}$ has infinitely many elements of $B_E$ of even length with pairwise disjoint supports, but $E_{(-2, 2)}^{(\infty, \infty)}$ is not a central grading. 
      \end{example}
      
      	As commented in Section \ref{Preliminares}, a $\mathbb{Z}$-grading on $E$ induces naturally a $\mathbb{Z}_{2}$-grading on $E$. Next it will be proved that the $\mathbb{Z}$-graded identities of a central $\mathbb{Z}$-grading are closely related to the $\mathbb{Z}_{2}$-graded identities of its induced $\mathbb{Z}_{2}$-grading.
      
      \begin{example}
      	The grading $E_{(-1, 1)}^{(\infty, \infty)}$ is central, which induces the $\mathbb{Z}_{2}$-grading $E_{can}$.
      \end{example}
      
      Let $\pi\colon F\langle X|\mathbb{Z}\rangle\to F\langle X|\mathbb{Z}_{2}\rangle$ be the homomorphism of free algebras given by: 
      \[\pi(x_{i}^{n})=x_{i}^{\overline{n}}.\] 
      Here $x_{i}^{n}$ denotes a variable of $\mathbb{Z}$-degree $n$ while $x_{i}^{\overline{n}}$ is a variable of $\mathbb{Z}_{2}$-degree $\overline{n}$. For example,
       \[\pi([x_1^{-4}, x_{2}^{5}])=[x_1^{\overline{0}}, x_{2}^{\overline{1}}],\]
      \[\pi(x_1^{8}x_{2}^{8})=x_1^{\overline{0}}x_2^{\overline{0}}\] and so on.
      
      \begin{lema}
      	Let $I$ be a $T_{2}$-ideal of $F\langle X|\mathbb{Z}_{2}\rangle$ and $J=\{f\in F\langle X|\mathbb{Z}\rangle\mid \pi(f)\in I\}$. Then $J$ is a $T_{\mathbb{Z}}$-ideal of $F\langle X|\mathbb{Z}\rangle$.
      \end{lema}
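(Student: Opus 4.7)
My plan is to verify the two defining conditions of a $T_\mathbb{Z}$-ideal for $J$: that $J$ is a two-sided ideal of $F\langle X|\mathbb{Z}\rangle$, and that $J$ is invariant under every $\mathbb{Z}$-graded endomorphism of $F\langle X|\mathbb{Z}\rangle$. The first condition is immediate, since $\pi$ is an algebra homomorphism and the preimage of any two-sided ideal under a homomorphism is again a two-sided ideal: for $f,g\in J$ one has $\pi(f-g)=\pi(f)-\pi(g)\in I$, and for $h\in F\langle X|\mathbb{Z}\rangle$ one has $\pi(hf)=\pi(h)\pi(f)\in I$ (and similarly on the right), so $J$ is closed under sums and under two-sided multiplication by arbitrary elements.

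For the invariance, given a $\mathbb{Z}$-graded endomorphism $\varphi$ of $F\langle X|\mathbb{Z}\rangle$, I would build a $\mathbb{Z}_2$-graded endomorphism $\widetilde{\varphi}$ of $F\langle X|\mathbb{Z}_2\rangle$ such that
\[
\pi\circ\varphi=\widetilde{\varphi}\circ\pi.
\]
The natural definition on generators is $\widetilde{\varphi}(x_i^{\overline{d}}):=\pi\bigl(\varphi(x_i^{d})\bigr)$, for a chosen representative $d\in\{0,1\}$ of $\overline{d}$; since $\varphi$ preserves $\mathbb{Z}$-degrees and $\pi$ reduces them modulo $2$, this image is $\mathbb{Z}_2$-homogeneous of degree $\overline{d}$, so $\widetilde{\varphi}$ extends uniquely to a $\mathbb{Z}_2$-graded endomorphism. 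Once the commutativity of the square is verified on the generators, it extends to all of $F\langle X|\mathbb{Z}\rangle$ by the universal property of the free algebra. Then for $f\in J$ we obtain $\pi(\varphi(f))=\widetilde{\varphi}(\pi(f))\in\widetilde{\varphi}(I)\subseteq I$ by the $T_2$-invariance of $I$, which gives $\varphi(f)\in J$ as required.

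The main obstacle is verifying the commutativity $\pi\circ\varphi=\widetilde{\varphi}\circ\pi$ at the level of generators. The subtlety is that $\pi$ collapses distinct $\mathbb{Z}$-graded variables having the same subscript and the same parity of $\mathbb{Z}$-degree (for example, $x_i^{n}$ and $x_i^{n+2}$ both map to $x_i^{\overline{n}}$), while $\varphi$ is free to send them to different $\mathbb{Z}$-graded elements whose $\pi$-images need not agree. Hence the lifts entering the definition of $\widetilde{\varphi}$ must be chosen judiciously, and any residual discrepancy has to be absorbed by invoking the $T_2$-invariance of $I$. This compatibility check is the technical heart of the argument, where the specific way $\pi$ interacts with the $\mathbb{Z}$- and $\mathbb{Z}_2$-gradings is exploited.
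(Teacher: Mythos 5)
Your first step is fine: $J=\pi^{-1}(I)$ is a two-sided ideal simply because it is the preimage of a two-sided ideal under an algebra homomorphism. (For the record, the paper states this lemma without proof, so there is no argument of the authors to compare yours against.) The problem is the second step. You reduce everything to the existence of a $\mathbb{Z}_2$-graded endomorphism $\widetilde{\varphi}$ with $\pi\circ\varphi=\widetilde{\varphi}\circ\pi$, correctly observe that the obstruction is that $\pi$ identifies $x_i^{n}$ with $x_i^{n+2}$, and then defer the resolution (``the lifts must be chosen judiciously'', ``any residual discrepancy has to be absorbed''). This is precisely the point that carries all the content, and it cannot be closed as you propose: no choice of lifts produces such a $\widetilde{\varphi}$ in general. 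If $\varphi(x_1^{0})=x_1^{0}$ and $\varphi(x_1^{2})=x_2^{2}$ (a legitimate $\mathbb{Z}$-graded endomorphism), then $\widetilde{\varphi}\circ\pi$ necessarily takes the same value on $x_1^{0}$ and $x_1^{2}$, while $\pi\circ\varphi$ takes the values $x_1^{\overline{0}}$ and $x_2^{\overline{0}}$, so the square cannot commute for any $\widetilde{\varphi}$.

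Moreover, the obstruction is not an artifact of your method: the conclusion itself fails on polynomials whose variables collide under $\pi$. Take $f=x_1^{0}-x_1^{2}$. Then $\pi(f)=x_1^{\overline{0}}-x_1^{\overline{0}}=0\in I$, so $f\in J$; but for the endomorphism $\varphi$ above, $\pi(\varphi(f))=x_1^{\overline{0}}-x_2^{\overline{0}}$, which does not lie in $I=T_2(E_{can})$ (substitute $1$ and $0$ for the two even variables). Hence $J$ is not invariant under $\mathbb{Z}$-graded endomorphisms for such $I$. What is true, and is all that Theorem \ref{MAIN THEO} and Corollary \ref{identidades Z das identidades Z2} actually use, is the statement restricted to polynomials whose variables have pairwise distinct subscripts (in particular the multilinear polynomials in the paper's normalization): there $\pi$ is injective on the finitely many variables occurring in $f$, your definition $\widetilde{\varphi}(x_j^{\overline{n}})=\pi(\varphi(x_j^{n}))$ on those image variables is unambiguous, the square commutes where it needs to, and your concluding computation $\pi(\varphi(f))=\widetilde{\varphi}(\pi(f))\in I$ goes through. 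You should either prove only that restricted version, or note explicitly that the lemma as literally stated requires this additional hypothesis.
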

      
      Given a central $\mathbb{Z}$-grading $E^{c}=\bigoplus_{r\in\mathbb{Z}}A_{r}$ on $E$, we denote by $E_{d}$ its induced $\mathbb{Z}_{2}$-grading. Recall that there exist three possibilities for $E_{d}$, namely: $E_{\infty}$, $E_{k^\ast}$ and $E_{k}$, for some non-negative integer $k$.
      
      \begin{lema}
      	Let $A=\bigoplus_{r\in\mathbb{Z}}A_{r}$ be an arbitrary $\mathbb{Z}$-grading on $E$ and $E_{d}=A_{\overline{0}}\oplus A_{\overline{1}}$ its induced $\mathbb{Z}_{2}$-grading. If $\pi(f)\in  T_{2}(E_{d})$ then $f\in T_{\mathbb{Z}}(A)$.
      \end{lema}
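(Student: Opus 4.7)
The plan is to show that any $\mathbb{Z}$-graded evaluation of $f$ in $A$ is at the same time a $\mathbb{Z}_{2}$-graded evaluation of $\pi(f)$ in $E_{d}$, so that vanishing of the latter forces vanishing of the former.

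First I would unpack the induced grading: by definition $E_{d}=A_{\overline{0}}\oplus A_{\overline{1}}$ where $A_{\overline{0}}=\bigoplus_{r \text{ even}}A_{r}$ and $A_{\overline{1}}=\bigoplus_{r \text{ odd}}A_{r}$. Consequently, if $a\in A_{n}$ then $a\in A_{\overline{n}}$, i.e.\ every $\mathbb{Z}$-homogeneous element is automatically $\mathbb{Z}_{2}$-homogeneous of the reduced degree. Next, consider $f=f(x_{1}^{n_{1}},\ldots,x_{s}^{n_{s}})\in F\langle X|\mathbb{Z}\rangle$; by construction the homomorphism $\pi$ only relabels degrees, so $\pi(f)=f(x_{1}^{\overline{n_{1}}},\ldots,x_{s}^{\overline{n_{s}}})\in F\langle X|\mathbb{Z}_{2}\rangle$ is the ``same'' formal expression viewed over the group $\mathbb{Z}_{2}$.

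Then I would fix an arbitrary admissible $\mathbb{Z}$-graded substitution $x_{i}^{n_{i}}\mapsto a_{i}\in A_{n_{i}}$. By the preceding observation, $a_{i}\in A_{\overline{n_{i}}}$, so the assignment $x_{i}^{\overline{n_{i}}}\mapsto a_{i}$ is an admissible $\mathbb{Z}_{2}$-graded substitution for $\pi(f)$ in $E_{d}$. Because $f$ and $\pi(f)$ differ only in the labels attached to their variables, the computed values agree as elements of $E$:
\[
f(a_{1},\ldots,a_{s})=\pi(f)(a_{1},\ldots,a_{s}).
\]
The hypothesis $\pi(f)\in T_{2}(E_{d})$ says the right-hand side is $0$; hence the left-hand side is $0$. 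Since the substitution was arbitrary, $f\in T_{\mathbb{Z}}(A)$.

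There is no substantial obstacle here: the argument is essentially formal. The only point requiring care is the bookkeeping between the two free algebras $F\langle X|\mathbb{Z}\rangle$ and $F\langle X|\mathbb{Z}_{2}\rangle$ and the fact that $\pi$ is precisely the map that makes every $\mathbb{Z}$-graded substitution into a $\mathbb{Z}_{2}$-graded one under the natural refinement of gradings $A_{r}\subseteq A_{\overline{r}}$. This is exactly the observation that a $\mathbb{Z}$-grading is finer than its induced $\mathbb{Z}_{2}$-grading, which forces $T_{\mathbb{Z}}(A)\supseteq \pi^{-1}(T_{2}(E_{d}))$.
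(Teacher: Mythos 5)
Your proof is correct and follows the same route as the paper: both arguments rest on the single observation that $A_{n}\subseteq A_{\overline{n}}$, so every $\mathbb{Z}$-graded substitution is automatically a $\mathbb{Z}_{2}$-graded substitution for $\pi(f)$ in $E_{d}$. You have merely spelled out the bookkeeping that the paper leaves implicit.
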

      \begin{proof}
      	Suppose that $\pi(f(x_{1}^{n_1},\ldots, x_{l}^{n_l}))=f(x_{1}^{\overline{n_1}},\ldots, x_{l}^{\overline{n_l}})\in  T_{2}(E_{d})$. As $A_{n}\subset A_{\overline{n}}$ for all $n\in\mathbb{Z}$, it is obtained immediately that $f(x_{1}^{n_1},\ldots, x_{l}^{n_l})\in T_{\mathbb{Z}}(A)$. 
      \end{proof}
      	
      In fact, the last lemma is more general. Let $G$ be an abelian group, let $H$ be a subgroup of $G$ and $f\in F\langle X|G\rangle $. 
      If $A$ is any $G$-graded algebra and $\pi(f)\in T_{G/H}(A)$ then $f\in T_{G}(A)$.

      The next result is an adaptation of \cite[Proposition 5.2]{centroneG1}. It will be important for our goals in this section. 
      \begin{theorem}\label{MAIN THEO}
      	Let $f\in F\langle X|\mathbb{Z}\rangle$ be a multilinear polynomial, suppose that $E^{c}=\bigoplus_{r\in\mathbb{Z}}A_{r}$ is a central $\mathbb{Z}$-grading on $E$ and let $E_{d}$ be its induced $\mathbb{Z}_{2}$-grading. Then $f\in T_{\mathbb{Z}}(E^{c})$ if and only if $\pi(f)\in  T_{2}(E_{d})$.
      \end{theorem}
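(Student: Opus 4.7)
The implication $(\Leftarrow)$ is already covered by the preceding lemma (it does not even require centrality: if $\pi(f)\in T_{2}(E_{d})$ then $f\in T_{\mathbb{Z}}(A)$ for \emph{any} $\mathbb{Z}$-grading $A$, since $A_{n}\subseteq A_{\overline{n}}$). So the whole work lies in the forward direction: assuming $f(x_{1}^{n_{1}},\ldots,x_{l}^{n_{l}})\in T_{\mathbb{Z}}(E^{c})$, show that $\pi(f)=f(x_{1}^{\overline{n_{1}}},\ldots,x_{l}^{\overline{n_{l}}})$ vanishes on $E_{d}$.

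Since $f$ is multilinear, it suffices to check this on homogeneous monomial substitutions. Fix $a_{i}\in B_{E}\cap A_{\overline{n_{i}}}$, i.e.\ $a_{i}\in A_{r_{i}}$ for some integer $r_{i}\equiv n_{i}\pmod{2}$. The plan is to ``shift'' each $a_{i}$ by a central correction so that its $\mathbb{Z}$-degree becomes exactly $n_{i}$, allowing us to invoke the hypothesis that $f$ is a $\mathbb{Z}$-graded identity.

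Since $n_{i}-r_{i}\in 2\mathbb{Z}=H$, centrality of the grading provides infinitely many elements of $B_{E}$ of even length with pairwise disjoint supports inside $A_{n_{i}-r_{i}}$. Using this, I would successively pick, for each $i=1,\ldots,l$, an element $w_{i}\in B_{E}\cap A_{n_{i}-r_{i}}$ of even length, whose support avoids $\bigcup_{j}\mathrm{supp}(a_{j})$ and the supports of the previously chosen $w_{1},\ldots,w_{i-1}$. Because $|w_{i}|$ is even, each $w_{i}$ lies in $E_{(0)}=Z(E)$; in particular all $w_{i}$'s commute with everything. Set $b_{i}:=w_{i}a_{i}$; then $b_{i}\in A_{r_{i}+(n_{i}-r_{i})}=A_{n_{i}}$, so this is a legitimate $\mathbb{Z}$-graded substitution and by hypothesis $f(b_{1},\ldots,b_{l})=0$.

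The last step is to extract $f(a_{1},\ldots,a_{l})=0$ from this. By multilinearity of $f$ and centrality of the $w_{i}$'s, for every monomial $x_{\sigma(1)}\cdots x_{\sigma(l)}$ appearing in $f$ we may slide all the $w_{j}$ to the left, obtaining
\[
f(w_{1}a_{1},\ldots,w_{l}a_{l})=w_{1}w_{2}\cdots w_{l}\cdot f(a_{1},\ldots,a_{l}).
\]
Writing $f(a_{1},\ldots,a_{l})=\sum_{u}\lambda_{u}u$ in the canonical basis $B_{E}$, each $u$ has support inside $\bigcup_{j}\mathrm{supp}(a_{j})$, which by construction is disjoint from $\mathrm{supp}(w_{1}\cdots w_{l})$. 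Hence the products $w_{1}\cdots w_{l}\cdot u$ are (up to sign) pairwise distinct basis elements of $E$, so $w_{1}\cdots w_{l}\cdot f(a_{1},\ldots,a_{l})=0$ forces every $\lambda_{u}=0$, giving $f(a_{1},\ldots,a_{l})=0$, as required.

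The main technical point (and really the only nontrivial one) is precisely the availability of the correcting elements $w_{i}$ with \emph{even length}, \emph{correct $\mathbb{Z}$-degree} $n_{i}-r_{i}\in 2\mathbb{Z}$, and \emph{pairwise disjoint supports avoiding the $a_{i}$'s}; this is exactly the content of the definition of a central $\mathbb{Z}$-grading, which is why the hypothesis is tailored for this argument. Everything else (multilinear reduction, centralness of elements of $E_{(0)}$, and linear independence of basis monomials with disjoint supports) is routine.
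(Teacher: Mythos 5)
Your proof is correct and follows essentially the same strategy as the paper's: reduce by multilinearity to basis-monomial substitutions, use centrality to multiply each $a_i$ by an even-length central monomial of $\mathbb{Z}$-degree $n_i-r_i$ with supports disjoint from everything else, apply the $\mathbb{Z}$-graded identity, and cancel the central factor using disjointness of supports. The only difference is notational (the paper writes the correction on the right, $c_j=a_jb_j$, and is slightly terser in the final cancellation step, which you spell out in more detail).
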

      \begin{proof}
      	We just need to prove one part.
      	Let
      	
      	\[f(x_{1}^{n_1},\ldots, x_{l_{n_1}}^{n_1},\ldots, x_{p+1}^{n_r},\ldots, x_{p+l_{n_r}}^{n_r})\in T_{\mathbb{Z}}(E^{c}),\]
      	where $p=l_{n_1}+l_{n_2}+\cdots + l_{n_{r-1}}$.
      	
      	Let $H=2\mathbb{Z}$, $g=\pi(f)$ and consider an arbitrary $\mathbb{Z}_{2}$-graded substitution 
      	\[x_{j}^{\overline{n}} \longmapsto\sum_{m\in H} a_{j}^{n+m},\]
      	where each $a_j$ is an element of $B_E$ and $a^t$ means that $a$ is homogeneous of $\mathbb{Z}$-degree $t$. Since $f$ is  multilinear, for all $j$, we may assume that the substitution is of the form 
      	\[x_{j}^{\overline{n}} \longmapsto a_{j}^{n+m_{j}},\]
      	for some $m_{j}\in H$.
      	
      	If $m\in H$, the component $A_{m}$ has infinitely many monomials of even length with pairwise disjoint supports. Therefore, for each index $j$, there exists $b_{j}^{-m_{j}}$ of even length with $\mathbb{Z}$-degree $-m_j$ such that $a_{j}^{n+m_{j}}$ and $b_{j}^{-m_{j}}$ have pairwise disjoint supports. We can choose the monomials $b_{j}^{-m_{j}}$ with pairwise disjoint support. Then we define 
      	\[c_{j}^{n}=a_{j}^{n+m_{j}}b_{j}^{-m_{j}},\]
      	which is a homogeneous element of degree $n$ in the $\mathbb{Z}$-grading $E^{c}$.
      	
      	Now we consider the $\mathbb{Z}$-graded substitution given by
      	\[x_{j}^{n} \longmapsto c_{j}^{n}.\]
      	
      	Due to $f\in T_{\mathbb{Z}}(E^{c})$ and each $b_{j}^{-m_j}\in Z(E)$ we obtain
      	\[0=f(c_{1}^{n_1},\ldots, c_{p+l_{n_r}}^{n_r})=f(a_{1}^{n_{1}+m_{1}}b_{1}^{-m_{1}},\ldots,a_{p+l_{n_r}}^{n_{r}+m_{p+l_{n_r}}}b_{p+l_{n_r}}^{-m_{p+l_{n_r}}} )=\]
      	
      	\[=(\prod_{j=1}^{p+l_{n_r}}b_{j}^{-m_j})g(a_{1}^{\overline{n_1}},\ldots, a_{p+l_{n_r}}^{\overline{n_r}}).\]
      	
      	Since the monomials $b_j$'s have pairwise disjoint supports, we obtain
      	\[0=g(a_{1}^{\overline{n_1}},\ldots, a_{p+l_{n_r}}^{\overline{n_r}})\]
      	and $\pi(f)=g\in T_{\mathbb{Z}}(E_{d})$.
      \end{proof}
      
      As an immediate consequence of Theorem \ref{MAIN THEO} we obtain a description of the $\mathbb{Z}$-graded identities for all central $\mathbb{Z}$-grading on $E$, namely:
      
      \begin{corollary}\label{identidades Z das identidades Z2}
      	Let $E^{c}=\bigoplus_{r\in\mathbb{Z}}A_{r}$ be a central $\mathbb{Z}$-grading on $E$ and let $E_{d}=A_{\overline{0}}\oplus A_{\overline{1}}$ be its induced $\mathbb{Z}_2$-grading. Let $I=T_{2}(E_{d})$ and $J=\{f\in F\langle X|\mathbb{Z}\rangle\mid \pi(f)\in I\}$. Then we have that $T_{\mathbb{Z}}(E^{c})= J$.

      \end{corollary}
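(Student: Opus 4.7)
The plan is to derive this corollary directly from Theorem \ref{MAIN THEO} together with the standard fact that, over a field of characteristic zero, a $T_\mathbb{Z}$-ideal is generated (and therefore uniquely determined) by its multilinear polynomials.

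First I would check that both $T_\mathbb{Z}(E^c)$ and $J$ are $T_\mathbb{Z}$-ideals of $F\langle X|\mathbb{Z}\rangle$. For $T_\mathbb{Z}(E^c)$ this is immediate; for $J$ this is precisely the content of the lemma proved just before Theorem \ref{MAIN THEO}. Consequently, by the reduction to multilinear identities available in characteristic zero, in order to prove the equality $T_\mathbb{Z}(E^c)=J$ it suffices to show that $T_\mathbb{Z}(E^c)$ and $J$ contain exactly the same multilinear polynomials.

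Next, I would fix an arbitrary multilinear polynomial $f\in F\langle X|\mathbb{Z}\rangle$ and run the following chain of equivalences: $f\in T_\mathbb{Z}(E^c)$ if and only if $\pi(f)\in T_2(E_d)$ by Theorem \ref{MAIN THEO}, which is the same as saying $\pi(f)\in I$, which by the very definition of $J$ is the same as $f\in J$. Thus the multilinear parts of $T_\mathbb{Z}(E^c)$ and $J$ coincide, and since both are $T_\mathbb{Z}$-ideals generated by their multilinear components, we conclude $T_\mathbb{Z}(E^c)=J$.

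The statement is essentially a repackaging of Theorem \ref{MAIN THEO}, so there is no genuine obstacle beyond invoking the multilinearization principle; the nontrivial content (namely the construction of the auxiliary central monomials $b_j^{-m_j}$ used to lift a $\mathbb{Z}_2$-graded substitution to a $\mathbb{Z}$-graded one, exploiting the central-grading hypothesis) has already been carried out in the proof of the theorem. The only point requiring a little care is to ensure the symmetry of the argument does not need a separate treatment of the reverse inclusion $J\subseteq T_\mathbb{Z}(E^c)$; but this is covered by the lemma stating that whenever $\pi(f)\in T_2(E_d)$ then $f\in T_\mathbb{Z}(A)$ for any $\mathbb{Z}$-grading $A$ whose induced $\mathbb{Z}_2$-grading is $E_d$, so the inclusion $J\subseteq T_\mathbb{Z}(E^c)$ holds with no use of the centrality hypothesis at all.
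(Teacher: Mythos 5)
Your proof is correct and follows essentially the same route as the paper, which simply declares the corollary an immediate application of Theorem \ref{MAIN THEO}; you have merely made explicit the two ingredients the paper leaves implicit, namely that $J$ is a $T_{\mathbb{Z}}$-ideal (the lemma preceding the theorem) and that in characteristic zero it suffices to compare multilinear polynomials. Your closing observation that the inclusion $J\subseteq T_{\mathbb{Z}}(E^{c})$ needs no centrality is accurate but does not change the argument.
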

      \begin{proof}
      	It is an immediate application of Theorem \ref{MAIN THEO}. 
      \end{proof}
      
       In Section \ref{start} we proved that the graded identities for a grading of type $E_{(m, n)}^{(\infty, \infty)}$ of full support are completely determined by the parity of $m\times n$. Another way to describe such identities is looking at the induced $\mathbb{Z}_{2}$-grading. 
      
      \begin{corollary}\label{cor}
      	Let $m<0<n$ be integer numbers, suppose that $E_{(m, n)}^{(\infty, \infty)}$ is of full support and let $E_{d}$ be its induced $\mathbb{Z}_{2}$-grading. Let $I=T_{2}(E_{d})$ and $J=\{f\in F\langle X|\mathbb{Z}\rangle\mid \pi(f)\in I\}$. Then $T_{\mathbb{Z}}(E_{(m, n)}^{(\infty, \infty)})= J$
       \end{corollary}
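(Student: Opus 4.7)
The plan is to deduce this corollary as an immediate application of Corollary \ref{identidades Z das identidades Z2}. Since the latter applies to arbitrary central $\mathbb{Z}$-gradings, the only thing to check is that $E_{(m,n)}^{(\infty,\infty)}$ of full support is indeed a central $\mathbb{Z}$-grading. Once that is established, the identification $T_{\mathbb{Z}}(E_{(m,n)}^{(\infty,\infty)}) = J$ follows directly, with $E_d$ playing the role of the induced $\mathbb{Z}_2$-grading.

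First I would recall the explicit example immediately following the definition of central $\mathbb{Z}$-grading, which states that every $2$-induced $\mathbb{Z}$-grading of full support on $E$ is central. For completeness, however, I would indicate how this assertion is verified from the earlier material. Writing $E_{(m,n)}^{(\infty,\infty)} = \bigoplus_{r\in\mathbb{Z}} A_r$ and $H = 2\mathbb{Z}$, the task is to show that for every even integer $h$ the component $A_h$ contains infinitely many monomials of $B_E$ of even length with pairwise disjoint supports.

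I would split into the four types introduced in Section \ref{start}. For types $(1)$, $(2)$ and $(3)$, the required property is already supplied, in an even stronger form, by Lemmas \ref{A0 inf even and odd1} and \ref{A0 inf even and odd}, which produce infinitely many elements of $B_E$ of both even and odd lengths in every $A_r$, with pairwise disjoint supports. For type $(4)$ I would further distinguish: if $m\cdot n$ is even, the identity $0 = n\cdot m + (-m)\cdot n$ together with Corollary \ref{muitos de mesmo length} gives the desired monomials in $A_0$ (and hence in every $A_r$, since $A_r \supseteq w A_0$ for any $w\in A_r$); if instead $m$ and $n$ are both odd, Lemma \ref{A in centro} shows that $A_h \subseteq Z(E)$ for every even $h$, so in particular every basis monomial of $A_h$ has even length, and since $A_h\neq 0$ and the grading is of full support, there are automatically infinitely many such monomials, which can be chosen with pairwise disjoint supports.

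With the central property in hand, applying Corollary \ref{identidades Z das identidades Z2} to $E^c = E_{(m,n)}^{(\infty,\infty)}$ and its induced $\mathbb{Z}_2$-grading $E_d$ yields $T_{\mathbb{Z}}(E_{(m,n)}^{(\infty,\infty)}) = J$, as required. I do not anticipate any serious obstacle here; the most delicate point is the subcase of type $(4)$ with $m,n$ both odd, where the odd-degree components contain no monomials of even length at all, but this is irrelevant because the definition of central grading only constrains the components of even degree.
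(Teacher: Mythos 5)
Your proposal is correct and takes essentially the same route as the paper: the paper treats this corollary as immediate from Corollary \ref{identidades Z das identidades Z2} together with the fact (recorded in the example after the definition) that every $2$-induced $\mathbb{Z}$-grading of full support is central, and your case-by-case verification of centrality is exactly the content of Lemmas \ref{A0 inf even and odd1}, \ref{A0 inf even and odd}, \ref{A in centro} and the surrounding discussion. One small imprecision: in the type $(4)$, $m\cdot n$ even subcase, the relation $0=n\cdot m+(-m)\cdot n$ yields monomials of \emph{odd} length $n-m$ in $A_0$, not the even-length ones you need; to get those you should either multiply two such disjointly supported monomials, or argue as the paper does that $\alpha+\beta$ and $\alpha'+\beta'$ are both odd so $A_1A_{-1}\subseteq A_0$ supplies infinitely many disjointly supported even-length monomials. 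This is a one-line fix and does not affect the validity of the argument.
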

    
      \begin{example}
      	As $[x_{1}, x_{2}]$ (when $\alpha(x_{1})=\overline{0}$ or $\alpha(x_{2})=\overline{0}$) and $y_{1}y_{2}+y_{2}y_{1}$ (when $\alpha(y_{1})=\alpha(y_{2})=\overline{1}$) generate the $\mathbb{Z}$-graded identities for $E_{can}$ in characteristic zero and $E_{(-1, 1)}^{(\infty, \infty)}$ is central, it follows that the same polynomials generate the $\mathbb{Z}$-graded identities for $E_{(-1, 1)}^{(\infty, \infty)}$ when we exchange the variables of $\mathbb{Z}_2$-degree $\overline{0}$ for variables of $\mathbb{Z}$-degree even and  the variables of $\mathbb{Z}_2$-degree $\overline{1}$ for variables of $\mathbb{Z}$-degree odd. Such polynomials are exactly the polynomials that were used in Section \ref{start} to define the variety $\mathfrak{V}_{2}^{\mathbb{Z}}$. 
      \end{example}
      \begin{example}\label{3 indices}
      	Given $k$ a  non-negative integer number, let $I_{1}$ and $I_{2}$ be infinite and disjoint sets of integer numbers such that
      	$I_{1}\cup I_{2}=\{k+1, k+2,\ldots\}$.

      	Now we define a $\mathbb{Z}$-grading on $E$ as follows:
      	
      	\begin{align*}
      	\|e_{i}\|^{\tilde{k}} & =\begin{cases} 0,\text{ if }  i=1,\ldots,k\\ 1, \text{ if } i\in I_{1}\\
      	-1 \text{ if } i\in I_{2} 
      	\end{cases}.
      	\end{align*}
      	  
      	 We extend the $\mathbb{Z}$-degree for all element in $B_E$ as usual. Let $E^{\tilde{k}}=\bigoplus_{r\in\mathbb{Z}}A_{r}$ be such $\mathbb{Z}$-grading on $E$. 
      	 
      	 It is easy to check that:
      	 \begin{itemize}
      	 	\item If $B$ is the subalgebra of $E^{\tilde{k}}$ generated by all $e_i$ where $i\in \{1,\ldots, k\}\cup I_1$ then $B$ is homogeneous and it is $\mathbb{Z}$-isomorphic to $E^{k}$. 
      	 	\item The $\mathbb{Z}$-grading $E^{\tilde{k}}$ induces $E_{k}$ by a quocient grading module $2\mathbb{Z}$.
      	 	\item The $\mathbb{Z}$-grading $E^{\tilde{k}}$ is central.
      	 \end{itemize}
      	 
      	 Then the $\mathbb{Z}$-graded identities of $E^{\tilde{k}}$ can be raised of the $\mathbb{Z}_2$-graded identities of $E_{k}$.

       \end{example}
       
       \begin{example}
       	Given $k$ a  non-negative integer number, it is known that $E^k$ induces $E_k$. If $r<0$ and $A_{r}$ is the respective component of $E^k$, we have that $A_{r}=0$. Hence $x$ is a graded identity for $E^k$, if $\alpha(x)=r$. On the other hand, $\pi(x)$ is not a $\mathbb{Z}_2$-graded identity for $E_k$. Therefore, it is not possible to raise the $\mathbb{Z}$-graded identity of $E^k$ using the identities of $E_{k}$. This happens because $E^k$ is not a central grading. 
       \end{example}

       Given an infinity subset $L'$ of $\{e_{1}, e_{2}, \ldots\}$, the subalgebra $E'$ of $E$ generated by $1$ and $L'$ is naturally isomorphic to $E$. So it makes sense to talk about central $\mathbb{Z}$-gradings on $E'$ as well. This will be used in the next theorem.

       \begin{theorem}\label{teo não homogeneo}
       Let $A=\bigoplus_{r\in\mathbb{Z}}A_{r}$ be an arbitrary $\mathbb{Z}$-grading on $E$ (not necessarily homogeneous). Suppose that there exist infinitely many elements $e_{i}\in L$ homogeneous in the grading which generate a subalgebra $B$ endowed with a central $\mathbb{Z}$-grading. Let $E_{d}$ be the $\mathbb{Z}_{2}$-grading induced  by $B$, $I=T_{2}(E_{d})$ and $J=\{f\in F\langle X|\mathbb{Z}\rangle\mid \pi(f)\in I\}$. In these conditions we have that $T_{\mathbb{Z}}(A)\subset J$. In particular, if there exist $m<0<n$ such that $\gcd(m, n)=1$ and both $L\cap A_{m}$ and $L\cap A_{n}$ are infinite, then
       \begin{enumerate}
       	\item [(a)] $T_{\mathbb{Z}}(A)=T_{\mathbb{Z}}(\mathfrak{V}_{1}^\mathbb{Z})$ or
       	\item [(b)] $T_{\mathbb{Z}}(A)\subset T_{\mathbb{Z}}(\mathfrak{V}_{2}^\mathbb{Z})$

       \end{enumerate}
   \end{theorem}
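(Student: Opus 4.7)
The plan is to bound $T_{\mathbb{Z}}(A)$ from above using the subalgebra $B$ (together with Corollary \ref{identidades Z das identidades Z2}) and, in the ``in particular'' part, to bound it from below by exploiting the ungraded identities of $E$.

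First I would establish the main inclusion $T_{\mathbb{Z}}(A)\subset J$. Since the chosen $e_i$'s generating $B$ are homogeneous in the grading of $A$, the algebra $B$ inherits a $\mathbb{Z}$-grading that is a graded subalgebra of $A$, so every $\mathbb{Z}$-graded substitution into $B$ is a $\mathbb{Z}$-graded substitution into $A$. Consequently $T_{\mathbb{Z}}(A)\subset T_{\mathbb{Z}}(B)$. Because $B$ is generated by infinitely many anticommuting elements it is isomorphic to $E$ as an ungraded algebra, and by hypothesis the inherited $\mathbb{Z}$-grading on $B$ is central with induced $\mathbb{Z}_{2}$-grading $E_{d}$. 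Applying Corollary \ref{identidades Z das identidades Z2} to $B$ gives $T_{\mathbb{Z}}(B)=J$, so $T_{\mathbb{Z}}(A)\subset J$.

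For the specific claim, I would choose $B$ to be generated by an infinite collection of $e_i$'s drawn from $L\cap A_{m}$ together with an infinite collection from $L\cap A_{n}$. Then the graded subalgebra $B$ is $\mathbb{Z}$-isomorphic to $E_{(m,n)}^{(\infty,\infty)}$, which by Corollary \ref{criterio full sup} has full support and is a central $\mathbb{Z}$-grading. In case $(a)$, when $m\times n$ is even, the results of Section \ref{start} (Propositions \ref{V1} and \ref{V11}, together with the type~$(4)$ with $m\times n$ even theorem) show that $E_{(m,n)}^{(\infty,\infty)}$ generates $\mathfrak{V}_{1}^{\mathbb{Z}}$, so $T_{\mathbb{Z}}(A)\subset T_{\mathbb{Z}}(B)=T_{\mathbb{Z}}(\mathfrak{V}_{1}^{\mathbb{Z}})$. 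The reverse inclusion is free: the defining relation $[x_{1},x_{2},x_{3}]$ of $\mathfrak{V}_{1}^{\mathbb{Z}}$ is a consequence of the ordinary identity $[x_{1},x_{2},x_{3}]=0$ of $E$, hence holds as a $\mathbb{Z}$-graded identity of $A$ irrespective of how $A$ is graded, so $A\in \mathfrak{V}_{1}^{\mathbb{Z}}$ and equality $(a)$ follows. In case $(b)$, when $m\times n$ is odd, the main theorem for type~$(4)$ tells us that $E_{(m,n)}^{(\infty,\infty)}$ generates $\mathfrak{V}_{2}^{\mathbb{Z}}$, and hence $T_{\mathbb{Z}}(A)\subset T_{\mathbb{Z}}(B)=T_{\mathbb{Z}}(\mathfrak{V}_{2}^{\mathbb{Z}})$.

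The main conceptual point (and the only real obstacle) is the asymmetry between $(a)$ and $(b)$: the defining identities of $\mathfrak{V}_{2}^{\mathbb{Z}}$ depend explicitly on the parity of the $\mathbb{Z}$-degrees of the variables, and this parity information is not controlled by the ungraded identities of $E$. Consequently one cannot conclude $A\in \mathfrak{V}_{2}^{\mathbb{Z}}$ from the fact that $E$ satisfies the Grassmann identity, which is why $(b)$ is stated as an inclusion rather than an equality. Everything else is a bookkeeping exercise, combining the subalgebra inclusion $T_{\mathbb{Z}}(A)\subset T_{\mathbb{Z}}(B)$ with the results already proved in Sections \ref{start} and the central grading description of $T_{\mathbb{Z}}(B)$.
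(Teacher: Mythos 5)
Your proof is correct and follows essentially the same route as the paper: first $T_{\mathbb{Z}}(A)\subset T_{\mathbb{Z}}(B)=J$ via Corollary \ref{identidades Z das identidades Z2}, then for the ``in particular'' part taking $B$ to be the subalgebra generated by $(L\cap A_{m})\cup(L\cap A_{n})$ and invoking the classification from Section \ref{start}. You in fact supply one detail the paper's proof leaves implicit, namely the reverse inclusion $T_{\mathbb{Z}}(\mathfrak{V}_{1}^\mathbb{Z})\subseteq T_{\mathbb{Z}}(A)$ (from $[x_{1},x_{2},x_{3}]$ being an ordinary identity of $E$) needed to upgrade $(a)$ from an inclusion to an equality.
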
 
       \begin{proof}
       	As $B$ is a central $\mathbb{Z}$-grading, according to Corollary \ref{identidades Z das identidades Z2}, we have that $T_{\mathbb{Z}}(B)=J$. Since $B$ is a homogeneous subalgebra of $A$, it follows that $T_{\mathbb{Z}}(A)\subset T_{\mathbb{Z}}(B)=J$ and the first part follows. 
       	
       	Now assume that there exist $m<0<n$ such that $\gcd(m, n)=1$ and both $L\cap A_{m}$ and $L\cap A_{n}$ are infinite sets. We define $B$ as the subalgebra of $A$ generated by all $e_{i}$ in $L\cap A_{m}$ or $L\cap A_{n}$. Hence $B$ is a homogeneous subalgebra of $A$ and $\mathbb{Z}$-isomorphic to some of the structures given in Section \ref{start}. If $B$ generates $\mathfrak{V}_{1}^\mathbb{Z}$, statement $(a)$ holds. If $B$ generates $\mathfrak{V}_{2}^\mathbb{Z}$, then $T_{\mathbb{Z}}(A)\subset T_{\mathbb{Z}}(\mathfrak{V}_{2}^\mathbb{Z})$ and $(b)$ holds.
       \end{proof} 
      
   Similar to Theorem \ref{teo não homogeneo}, we have the following result. 
   \begin{theorem}
   	Let $A=\bigoplus_{r\in\mathbb{Z}}A_{r}$ be an arbitrary $\mathbb{Z}$-grading on $E$ (not necessarily homogeneous). Suppose that there exist integers $m<0<n$, with opposite parities, such that both $L\cap A_{m}$ and $L\cap A_{n}$ are infinite sets. Then 
   	$T_{\mathbb{Z}}(A)=T_{\mathbb{Z}}(\mathfrak{V}_{1}^\mathbb{Z}),$
   	up to graded monomial identities of degree 1.
   \end{theorem}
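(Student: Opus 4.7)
The plan is to reduce to the homogeneous setting handled in Theorem~\ref{teo não homogeneo}, but now without assuming $\gcd(m,n)=1$. I would build a homogeneous subalgebra $B$ of $A$ whose $\mathbb{Z}$-graded identities can be explicitly computed, and then sandwich $T_{\mathbb{Z}}(A)$ between $T_{\mathbb{Z}}(\mathfrak{V}_{1}^{\mathbb{Z}})$ and $T_{\mathbb{Z}}(B)$, which will turn out to agree up to monomial identities of degree $1$.

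First, I would select infinitely many generators from $L\cap A_{m}$ and infinitely many from $L\cap A_{n}$, and let $B$ be the subalgebra of $A$ they generate. Since each such generator is homogeneous, $B$ is a homogeneous subalgebra of $A$, and as a $\mathbb{Z}$-graded algebra it is isomorphic to $E_{(m,n)}^{(\infty,\infty)}$. Let $d=\gcd(-m,n)$. Because $m$ and $n$ have opposite parities, any common divisor of $m$ and $n$ must be odd, so $d$ is odd and consequently $m/d$ and $n/d$ still have opposite parities. In particular $\gcd(m/d,n/d)=1$ and $(m/d)(n/d)$ is even, so by Corollary~\ref{criterio full sup} the grading $E_{(m/d,n/d)}^{(\infty,\infty)}$ has full support and is of type $(1)$, $(2)$ or $(3)$; hence by Propositions~\ref{V1} and~\ref{V11} it generates the variety $\mathfrak{V}_{1}^{\mathbb{Z}}$.

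Next I would apply Theorem~\ref{idquasefull}: $T_{\mathbb{Z}}(B)$ is generated as a $T_{\mathbb{Z}}$-ideal by $\Psi_{d}(\Phi_{d}(S))\cup N$, where $S=\{[x_{1}^{a},x_{2}^{b},x_{3}^{c}]:a,b,c\in\mathbb{Z}\}$ is a generating set for $T_{\mathbb{Z}}(\mathfrak{V}_{1}^{\mathbb{Z}})$ and $N=\{x\in F\langle X|\mathbb{Z}\rangle:\alpha(x)\notin d\mathbb{Z}\}$. The set $\Psi_{d}(\Phi_{d}(S))$ is exactly the collection of triple commutators whose variables have degrees in $d\mathbb{Z}$; combined with the degree $1$ monomial identities in $N$, any triple commutator involving a variable of degree outside $d\mathbb{Z}$ already lies in $\langle N\rangle_{T_{\mathbb{Z}}}$, so together they generate all triple commutators. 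Therefore $T_{\mathbb{Z}}(B)$ coincides with $T_{\mathbb{Z}}(\mathfrak{V}_{1}^{\mathbb{Z}})$ modulo degree $1$ monomial identities.

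To conclude, since $B$ is a homogeneous subalgebra of $A$ the restriction gives $T_{\mathbb{Z}}(A)\subseteq T_{\mathbb{Z}}(B)$, and since $[x_{1},x_{2},x_{3}]$ is an ordinary identity of $E$ one has $T_{\mathbb{Z}}(\mathfrak{V}_{1}^{\mathbb{Z}})\subseteq T_{\mathbb{Z}}(A)$; sandwiching produces the desired equality up to degree $1$ monomial identities. The main delicate point is the case $d>1$: it is exactly this detour through $E_{(m/d,n/d)}^{(\infty,\infty)}$ via Theorem~\ref{idquasefull} that forces the caveat in the statement, because the support of $B$ is $d\mathbb{Z}$, which in general is properly contained in the support of $A$, so the degree $1$ monomial identities on the two sides need not coincide.
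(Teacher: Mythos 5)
Your proposal is correct and follows essentially the same route as the paper: pass to the homogeneous subalgebra $B$ generated by $(L\cap A_{m})\cup(L\cap A_{n})$, identify it with $E_{(m,n)}^{(\infty,\infty)}$, reduce via Theorem~\ref{idquasefull} to the full-support grading $E_{(m/d,n/d)}^{(\infty,\infty)}$ (which generates $\mathfrak{V}_{1}^{\mathbb{Z}}$ because the opposite parities survive division by the necessarily odd $d$), and sandwich $T_{\mathbb{Z}}(A)$ between $T_{\mathbb{Z}}(\mathfrak{V}_{1}^{\mathbb{Z}})$ and $T_{\mathbb{Z}}(B)$. Your write-up in fact makes explicit two points the paper leaves implicit, namely why $d$ is odd and why $\Psi_{d}(\Phi_{d}(S))\cup N$ differs from the full set of triple commutators only by the degree-one monomial identities.
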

   \begin{proof}
   	It is clear that $[x_1,x_2,x_3]$ is a graded identity for $A$, for every choice of degree $\alpha(x_1)$, $\alpha(x_2)$, $\alpha(x_3)$. As in Theorem \ref{teo não homogeneo}, we define $B$ as being the subalgebra of $A$ generated by $(L\cap A_{m})\cup (L\cap A_{n})$. In this case $B$ is $\mathbb{Z}$-isomorphic to $E_{(m, n)}^{(\infty, \infty)}$, which is not necessarily of full support. By Theorem \ref{corbom}, it follows that $Sup(B)=d\mathbb{Z}$, where $d=\gcd(m,n)$. According to Theorem \ref{idquasefull} the ideal $T_{\mathbb{Z}}(E_{(m, n)}^{(\infty, \infty)})$ is generated by the generators of $T_{\mathbb{Z}}(E_{(\frac{m}{d}, \frac{n}{d})}^{(\infty, \infty)})$ together with
   	$$N=\{x\in F\langle X|\mathbb{Z}\rangle\mid \alpha(x)\notin d\mathbb{Z}\}.$$
   	We have that $E_{(\frac{m}{d}, \frac{n}{d})}^{(\infty, \infty)}$ is of full support. As $m$ and $n$ have opposite parities, it follows that  $E_{(\frac{m}{d}, \frac{n}{d})}^{(\infty, \infty)}$ induces the grading $E_\infty$. Hence $T_{\mathbb{Z}}(E_{(\frac{m}{d}, \frac{n}{d})}^{(\infty, \infty)})=T_{\mathbb{Z}}(\mathfrak{V}_{1}^\mathbb{Z})$. This implies that
   	
   	$$T_{\mathbb{Z}}(\mathfrak{V}_{1}^\mathbb{Z})\subseteq  T_{\mathbb{Z}}(A)\subseteq T_{\mathbb{Z}}(B),$$
   	and therefore the result follows, since $T_{\mathbb{Z}}(B)$ differs from $T_{\mathbb{Z}}(\mathfrak{V}_{1}^\mathbb{Z})$ by the graded monomial identities of degree 1.
   \end{proof}
   	

  \section{Further discussion}
  Throughout this section some possible generalizations of the results presented in this paper will be discussed. Here our intention is to motivate future research on the subject.
  
  \begin{itemize}
  	\item \textbf{For other fields} 
  \end{itemize}

  Let $F$ be an infinite field of characteristic $p>2$. Consider the variety $\mathfrak{V}_{1}^\mathbb{Z}$ of $\mathbb{Z}$-graded algebras define by
  \begin{enumerate} 
  \item $[x_{1}, x_{2}, x_{3}]$, for all degree of $x_{1}$, $x_{2}$, $x_{3}$.
  \item $x^p$, for all $x$ with degree different from zero.
  \end{enumerate} 
  Moreover consider the variety $\mathfrak{V}_{2}^\mathbb{Z}$ of $\mathbb{Z}$-graded algebras define by
  
  \begin{enumerate}
  \item $[x_{1}, x_{2}]$, if either $\alpha(x_{1})$ or $\alpha(x_{2})$ is even.
  
  \item $x_{1}x_{2}+x_{2}x_{1}$, if both $\alpha(x_{1})$ and $\alpha(x_{2})$ are odd.
  
  \item $x^{p}$, if $\alpha(x)>0$ is even. 
  \end{enumerate} 
  
  Consider the Grassmann algebra $E$ over $F$. We believe that the $\mathbb{Z}$-gradings studied in Section \ref{start} generate either the variety $\mathfrak{V}_{1}^\mathbb{Z}$ or the variety $\mathfrak{V}_{2}^\mathbb{Z}$. However the arguments and notations become rather clumby. This is why we develop the paper considering $F$ of characteristic zero. 
  
  \begin{itemize}
  	\item \textbf{For other groups} 
  \end{itemize}
  Let $G$ be any group  (not necessarily abelian), and $g_{1}, \ldots, g_{r}$ in $G$ such that $g_{i}g_{j}=g_{j}g_{i}$, for all $i, j\in\{1,\ldots, r\}$. As commented in Remark \ref{induzindo em qualquer G}, using $g_{1}, \ldots, g_{r}$, we can induce a $G$-grading on $E$. Hence one can investigate conditions for full support in this case. 
  \begin{theorem}
  	Let $G$ be an abelian group. There exists a $r$-induced $G$-grading on $E$ of full support if and only if the group $G$ is finitely generated. 
  \end{theorem}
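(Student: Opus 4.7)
The plan is to unwind the definition of an $r$-induced $G$-grading and translate the full-support condition into an arithmetic statement about the chosen degrees $g_1,\ldots,g_r$. Recall that in an $r$-induced grading we split the basis $\{e_i\}$ of $L$ into $r$ disjoint subsets $L_{g_1}^{v_1},\ldots,L_{g_r}^{v_r}$, and a nonzero basis monomial $e_{i_1}\cdots e_{i_k}$ has $G$-degree $n_1 g_1+\cdots+n_r g_r$, where $n_j$ is the number of factors taken from $L_{g_j}^{v_j}$. So $g\in Sup(E_{(g_1,\ldots,g_r)}^{(v_1,\ldots,v_r)})$ if and only if $g$ admits a representation
\[
g=n_1g_1+\cdots+n_rg_r,\qquad n_j\in\mathbb{N}_0,\ n_j\le v_j,
\]
and such a representation actually yields a nonzero monomial (which is guaranteed whenever $|L_{g_j}^{v_j}|\ge n_j$, in particular whenever $v_j=\infty$).

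For the forward direction, assume $E_{(g_1,\ldots,g_r)}^{(v_1,\ldots,v_r)}$ has full support. Then every element $g\in G$ lies in the sub-monoid $\mathbb{N}_0 g_1+\cdots+\mathbb{N}_0 g_r$; applying this to both $g$ and $-g$ shows that each element of $G$ is a $\mathbb{Z}$-linear combination of $g_1,\ldots,g_r$. Hence the subgroup $\langle g_1,\ldots,g_r\rangle$ of $G$ equals $G$, so $G$ is finitely generated by at most $r$ elements.

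For the converse, suppose $G$ is finitely generated, say $G=\langle h_1,\ldots,h_s\rangle$. Set $r=2s$ and choose
\[
g_{2i-1}=h_i,\qquad g_{2i}=-h_i\quad (i=1,\ldots,s),
\]
with $v_1=\cdots=v_r=\infty$. Since $L$ has a countably infinite basis, we may split it into $r$ pairwise disjoint infinite subsets $L_{g_1}^{\infty},\ldots,L_{g_r}^{\infty}$; all pairs $g_i,g_j$ commute because $G$ is abelian, so Remark \ref{induzindo em qualquer G} applies. Given any $g\in G$, write $g=\sum_{i=1}^s a_i h_i$ with $a_i\in\mathbb{Z}$, decompose each integer as $a_i=a_i^+-a_i^-$ with $a_i^\pm\in\mathbb{N}_0$, and observe that
\[
g=\sum_{i=1}^s a_i^+\, g_{2i-1}+\sum_{i=1}^s a_i^-\, g_{2i}
\]
is a non-negative integer combination of $g_1,\ldots,g_r$. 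Because each $v_j=\infty$ we can actually realize this combination by choosing distinct basis vectors from the corresponding infinite subsets $L_{g_j}^\infty$, producing a nonzero monomial of $G$-degree $g$. Thus $g\in Sup(E_{(g_1,\ldots,g_r)}^{(\infty,\ldots,\infty)})$, and full support follows.

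There is no real obstacle here: the forward direction is immediate once the degree formula is translated into a monoid-generation statement, and the backward direction is handled by the standard trick of adjoining negatives to a generating set of an abelian group. The only small point worth highlighting is that the upper indexes must be infinite (or at least sufficiently large) to guarantee that each admissible formal combination $\sum n_jg_j$ is actually realized by a nonzero product in $E$; taking $v_j=\infty$ throughout makes this automatic and costs nothing since $L$ is countably infinite.
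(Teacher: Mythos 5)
Your proof is correct and follows essentially the same route as the paper: the forward direction translates full support into the statement that $G$ is generated (as a group) by the finitely many degrees $g_1,\ldots,g_r$, and the converse uses the same construction of a $2s$-induced grading whose degrees are a finite generating set together with its inverses, each with infinite-dimensional component. The only cosmetic difference is that the paper writes a general element as a word $a_1+\cdots+a_m$ in the generators and their inverses, whereas you package the same data as $\sum_i a_i^{+}g_{2i-1}+\sum_i a_i^{-}g_{2i}$.
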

  \begin{proof}
  	Let us consider $G$ with additive notation. Assuming that $G$ is finitely generated, consider $g_{1},\ldots, g_{n}\in G$ such that $G=\langle g_{1},\ldots, g_{n}\rangle$ and the decomposition of $L$:
  	\[L=L_{g_{1}}^{\infty}\oplus\cdots\oplus L_{g_{n}}^{\infty}\oplus L_{g_{1}^{-1}}^{\infty}\oplus\cdots\oplus L_{g_{n}^{-1}}^{\infty}.\]
  	Let $E_{(g_{1},\ldots, g_{n},g_{1}^{-1},\ldots, g_{n}^{-1})}^{(\infty,\ldots, \infty, \infty,\ldots, \infty)}$ be the induced $G$-grading on $E$. Given $g\in G$, we have that $g=a_{1}+\cdots + a_{m}$, where $a_{i}\in \{g_{1},\ldots, g_{n},g_{1}^{-1},\ldots, g_{n}^{-1}\},$ and $m$ is a positive integer. For each $k\in \{1,\ldots, m\}$, we take $j_{k}\in\mathbb{N}$ such that $e_{j_k}\in L_{a_k}^{\infty}$. In this case, 
  	\[\|e_{j_1}\cdots e_{j_m}\|=\|e_{j_1}\|+\cdots + \|e_{j_m}\|=a_{1}+\cdots + a_{m}=g,\]
  	and $e_{j_1}\cdots e_{j_m}\neq 0$. Hence $g\in Sup(E_{(g_{1},\ldots, g_{n},g_{1}^{-1},\ldots, g_{n}^{-1})}^{(\infty,\ldots, \infty, \infty,\ldots, \infty)})$.
  	
  	Now suppose that there exist $g_{1},\ldots, g_{r}\in G$ and $v_{1},\ldots, v_{r}\in\mathbb{N}\cup \{\infty\}$ such that $E_{(g_{1},\ldots, g_{r})}^{(v_{1},\ldots,v_{r})}$ is of full support. In this case, given $g\in G$ there exist $i_{1},\ldots, i_{m}, j_{1},\ldots, j_{m}\in\mathbb{N}$ such that $e_{j_k}\in L_{g_{i_k}}^{v_{i_k}}$, for all $k\in \{1, \ldots, m\}$, and $0\neq e_{j_1}\cdots e_{j_m}\in A_{g}$. Therefore,
  	\[g=\|e_{j_1}\|+\cdots +\|e_{j_m}\|=g_{i_1}+\cdots + g_{i_m}\]
  	and we obtain $g\in \langle g_{1},\ldots, g_{r}\rangle$. Hence we have that $G=\langle g_{1},\ldots, g_{r}\rangle$ and we are done.
  \end{proof}

  Such result is an initial motivation to address the problem of the gradings on $E$ by infinite groups other than $\mathbb{Z}$.

  \begin{itemize}
  	\item \textbf{For $3$-induced $\mathbb{Z}$-gradings} 
  \end{itemize}
  In view of the results of Section \ref{start} a natural forwarding is to study $r$-induced $\mathbb{Z}$-gradings on $E$ of full support, when $r>2$. This seems to be an interesting problem. Recall that the Example \ref{3 indices} presents a kind of $3$-induced $\mathbb{Z}$-grading on $E$ of full support whose $T_{\mathbb{Z}}$-ideal of graded identities is different from both $T_{\mathbb{Z}}(\mathfrak{V}_{1}^\mathbb{Z})$ and $T_{\mathbb{Z}}(\mathfrak{V}_{2}^\mathbb{Z})$.


\begin{flushleft}
	\textbf{Acknowledgments}
\end{flushleft}
The third author acknowledges the support by the FAPESP grant No. 2019/12498-0.

\end{document}